\date{}
\DeclareMathOperator*{\nn}{\nonumber}
\DeclareMathOperator*{\cost}{cost}
\newtheorem{lemma}{Lemma}
\newtheorem{theorem}{Theorem}
\newtheorem{remark}{Remark}
\newtheorem{problem}{Problem}
\DeclareMathOperator{\vx}{\mathbf{x}}
\DeclareMathOperator{\vw}{\mathbf{w}}
\DeclareMathOperator{\vu}{\mathbf{u}}
\DeclareMathOperator{\vs}{\mathbf{s}}
\newcommand\caus[1]{(zI-{#1})^{-1}}
\newcommand\anti[1]{(z^{-1}I-{#1})^{-1}}
\DeclarePairedDelimiterX{\norm}[1]{\lVert}{\rVert}{#1}
\begin{document}
\title{Optimal Competitive-Ratio Control}
\author{Oron Sabag, Sahin Lale, and Babak Hassibi\thanks{The authors are with the Department of Electrical Engineering, California Institute of Technology, Pasadena, CA 91125, USA (e-mails:\{oron,alale,hassibi\}@caltech.edu). }}
\pdfoutput=1

\maketitle
\begin{abstract}
Inspired by competitive policy designs approaches in online learning, new control paradigms such as competitive-ratio and regret-optimal control have been recently proposed as alternatives to the classical $\mathcal{H}_2$ and $\mathcal{H}_\infty$ approaches. These competitive metrics compare the control cost of the designed controller against the cost of a clairvoyant controller, which has access to past, present, and future disturbances in terms of ratio and difference, respectively. While prior work provided the optimal solution for the regret-optimal control problem, in competitive-ratio control, the solution is only provided for the sub-optimal problem. In this work, we derive the optimal solution to the competitive-ratio control problem. We show that the optimal competitive ratio formula can be computed as the maximal eigenvalue of a simple matrix, and provide a state-space controller that achieves the optimal competitive ratio. We conduct an extensive numerical study to verify this analytical solution, and demonstrate that the optimal competitive-ratio controller outperforms other controllers on several large scale practical systems. The key techniques that underpin our explicit solution is a reduction of the control problem to a Nehari problem, along with a novel factorization of the clairvoyant controller's cost. We reveal an interesting relation between the explicit solutions that now exist for both competitive control paradigms by formulating a regret-optimal control framework with weight functions that can also be utilized for practical purposes.

\end{abstract}

\section{Introduction}

Given a dynamical system that is excited by some disturbance process and can be influenced by a control input, the main approach in optimal control is to choose the control signals in such a way to minimize a given regulating cost. For this purpose, there are two classical paradigms adopted in the literature: $\mathcal{H}_2$ and $\mathcal{H}_{\infty}$ control. In $\mathcal{H}_2$ control, the expected control cost is minimized under the assumption of stochastic disturbances, whereas in $\mathcal{H}_{\infty}$ control the worst-case cost is minimized for all possible bounded (energy) disturbances under no distributional assumption. For both methods, degradation of performance can occur if the assumptions on the disturbances are not met. For instance, in $\mathcal{H}_2$ control, this can happen when the statistics of the disturbances are not what the $\mathcal{H}_2$ controller was designed for \cite{doyle1978guaranteed}. On the other hand, in $\mathcal{H}_\infty$ control, the designed controller may be too conservative since it is designed with respect to the worst-case scenario (disturbance) which may not be the typical scenario in practical systems~\cite{GloverDoyle_book}.

Motivated by competitive design approaches in online learning~\cite{shalev2012online}, two control paradigms have been recently proposed as alternatives to the classical paradigms, namely regret-optimal control~\cite{sabagFIACC} and competitive-ratio control~\cite{goel2021competitive}. These criteria aim to minimize the difference or the ratio between the costs of a causal controller (to be designed) and the cost of the optimal clairvoyant controller that has access to entire disturbances sequence. The corresponding competitive metric, i.e., the difference or the ratio between the costs is a function of the disturbance sequence, and the designed controller aims to minimize the maximal competitive criterion over all disturbance sequences.

In the full-information control problem studied in this paper, the cost of the optimal clairvoyant (non-causal) controller yields a universal lower bound on the cost of any other controller (linear or non-linear, causal or non-causal). In the competitive approach, we attempt to follow the performance of the optimal non-causal controller as closely as possible. In the regret-optimal case, we choose a control strategy to minimize the worst-case (over bounded energy disturbances) difference between the causal and optimal non-causal costs. In the competitive-ratio case, we minimize the corresponding worst-case ratio of these two costs. These strategies result in novel controller behavior, e.g., the regret-optimal controller interpolates between $\mathcal{H}_2$ and $\mathcal{H}_{\infty}$ to achieve the best of both worlds~\cite{sabagFIACC}, and provide superior performance in various control tasks with various disturbance characteristics~\cite{sabagFIACC,SabagFilteringAISTATS,goel2021competitive}. \looseness=-1

The regret-optimal control problem has been solved in \cite{sabagFIACC}, meaning that the optimal value of the regret has been explicitly given and a corresponding state-space controller has been provided \cite{sabag2021regret}. In the competitive ratio case, however, only the sub-optimal problem (where the competitive ratio is bounded by a given constant) has been solved, meaning that the optimal solution can only be found by performing a bisection over this constant \cite{goel2021competitive}\footnote{This is akin to the situation in standard $\mathcal H_\infty$ control.}.

\textbf{Contributions.} The main result in this work is the optimal solution to the competitive-ratio control problem. An optimal solution is referring to an explicit formula for the optimal (minimal) competitive ratio, which is given in terms of a maximal eigenvalue of a simple matrix, and to an explicit state-space controller that achieves this optimal competitive ratio. To obtain these explicit results, we use an operator-theoretic approach to show that the competitive-ratio control problem can be formulated as a Nehari problem~\cite{nehari1957bounded}. The advantage of this formulation is that the Nehari problem can be solved optimally, and its state-space solution has been recently characterized~\cite{sabag2021regret}. The resulting controller design simply requires a solution to two Riccati, and three (linear) Lyapunov equations in the general case. Given our explicit solution, we show that the optimal competitive-ratio controller simplifies in special cases of interest. For instance, for scalar systems, the optimal competitive-ratio is achieved with the standard $\mathcal{H}_2$ (LQR) state-feedback controller. \looseness=-1

The approach in this paper to reduce the control problem to a Nehari problem is similar to the methodology used to solve the regret-optimal control problem \cite{sabag2021regret}. To underpin the reason that these problems can be solved optimally (rather than the sub-optimal solution to $\mathcal H_\infty$ control), we generalize the regret-optimal control framework to include weight operators on the system state, input, and disturbances. We show that this generalized problem can be formulated as a regret-optimal control problem without weights (but with a modified system parameters) and therefore it has an optimal solution from \cite{sabag2021regret}. Interestingly, we show that competitive-ratio control can be formulated as a regret-optimal control problem with weights. Unlike the weights used in synthesizing $\mathcal{H}_{\infty}$ controllers that depend on mostly domain knowledge, e.g. weighting on the disturbance spectrum, the weights in regret-optimal control can be system dependent through the weighting on the non-causal controller. This generalized regret framework provides a new perspective on control system design in practice.


\textbf{Related Work.} The competitive-ratio and regret are competitive metrics that quantify the performance of online learning algorithms in the context of sequential decision-making for memoryless environments against the optimal policy in  hindsight~\cite{borodin2005online,shalev2012online,zinkevich2003online,gordon1999regret}. This concept has been adapted to linear dynamical systems with stochastic~\cite{abbasi2011regret,cohen19b,lale2020logarithmic, muthirayan2021online, lale2022reinforcement}, or adversarial disturbances~\cite{agarwal2019online,foster2020logarithmic,simchowitz2020making}, as well as nonlinear dynamical systems~\cite{sattar2021identification,boffi2021regret,lale2021model}. These studies focus on control algorithms that compete against a best fixed controller from some parametric class of policies, \textit{i.e.}, policy regret. In the current work, the reference policy is not restricted to a parametric class, but is chosen as the universal optimal clairvoyant controller. Another key distinction is that our solution is explicit and achieves the \emph{optimal competitive ratio}.

Related to the current work is the regret-optimal control framework that also competes against the \emph{globally} optimal clairvoyant controller. This framework aims to produce controllers mimic the performance of the clairvoyant controller by minimizing the regret.  The formulation of the regret-optimal control problem and the regret-optimal controller were introduced in \cite{sabagFIACC,sabag2021regret} for the full-information control problem, and was later extended to the finite-horizon regime in~\cite{goel2021regret}. The controller for the finite-horizon regime has been extended to incorporate state and input safety constraints  in~\cite{martin2022safe,didier2022system} by leveraging the System Level Synthesis framework~\cite{wang2019system}, and to regret bound for the $\mathcal H_\infty$ controller \cite{karapetyan2022regret}. The regret-optimal framework studies the controllers that minimize the difference between costs, while the competitive ratio studied here minimizes the costs ratio. In Section \ref{sec:numerical}, we compare the performance of these two paradigms.

Bounds on the competitive ratio has been the focus of many prior works in control~\cite{goel2019online,shi2020online,yu2020competitive,li2022robustness}. The competitive-ratio control problem has been proposed in \cite{goel2021competitive}, where its sub-optimal solution was derived under certain conditions. In the current paper, we provide the solution for the \emph{optimal competitive-ratio control problem}. This is a surprising result since typical solutions to control problems with adversarial disturbance (i.e., {\tt min-max} problems) can be solved via their corresponding sub-optimal problems. \looseness=-1


\section{Problem Formulation and Preliminaries}\label{sec:setting}
In this section, we present the state-space setting of the linear quadratic regulator (LQR) control problem and revisit some of the properties of the optimal non-causal controller. We then present the competitive-ratio control problem, and compare its objective against the formulations of $\mathcal{H}_{\infty}$ and the regret-optimal controls. Finally, we define several Riccati and Lyapunov equations that are needed for the main results. \looseness=-1
\subsection{Notation}
The operator norm of a matrix $A$ is denoted by $\|A\|$. We use $A^\ast$ and $x^\ast$ to denote the conjugate transpose of a matrix $A$ and a vector $x$, respectively. The largest eigenvalue of a positive semidefinite matrix $A$ is denoted by $\lambda_{\text{max}}(A)$, and the square root of $A$ is denoted by $A^{1/2}$, i.e., $A=A^{1/2}A^{1/2}$. We use $I$ to denote the identity matrix when the dimensions are clear from the context. Caligraphic letters e.g. $\mathcal{X}$ denote doubly-infinite linear operators and bold-face letters denote two-sides sequences, e.g., $\vx=\{x_i\}$. The strictly-causal part of a linear operator $\mathcal A$ (i.e., its strictly lower triangular part) is denoted by $\{\mathcal A\}_+$, and the anti-causal part of $\mathcal A$ (its upper triangular) is denoted by $\{\mathcal A\}_-$. An operator is said to be causal if its anti-causal part is zero.

\subsection{The State-Space Setting}
We study the LQR control problem for time-invariant dynamical systems
\begin{align}\label{eq:state-space}
    x_{t+1}&= A x_t + B_u u_t + B_w w_t
\end{align}
where $x_t \in \mathbb{R}^n$ is the state, $u_t \in \mathbb{R}^p$ is the control signal, and $w_t \in \mathbb{R}^m$ is the disturbance. We focus on the quadratic cost at time $t$, $c_t = x_t^\ast Q x_t + u_t^\ast R u_t$ with $Q,R\succ0$, and the corresponding infinite-horizon cumulative cost is
\begin{align}\label{eq:cum_cost}
    \cost(\vu,\vw) = \sum_{t=-\infty}^\infty c_t.
\end{align}
Without loss of generality, we assume $R = I$ by scaling $B_u$ with $B_uR^{-/2}$. We also assume that the pair $(A,B_u)$ is stabilizable and $B_w$ is full column-rank. A controller is defined as a sequence of strictly causal mappings from the disturbance to the control signal. That is, at time $t$, the control signal $u_t$ is a function of $\{w_i\}_{i=-\infty}^{t-1}$. For the sake of competitive-ratio control, it can be shown that the optimal controller can be described with a linear mapping. We remark that the techniques used in this paper can be directly extended to causal controllers (see \cite{sabag2021regret}). Our main results also show that the optimal controller does not really need an access to the underlying disturbances but only to the system states (Remark \ref{remark:state_sufficient}). We consider disturbance sequences $\vw$ that have bounded energy, i.e., $\vw\in\ell_2$, which is a necessary condition to make the cumulative cost in \eqref{eq:cum_cost} finite.

Before presenting our problem formulation, it is convenient to depart from the state-space setting to the general control problem described in terms of linear operators. In particular, the state-space in \eqref{eq:state-space} can be equivalently represented as
\begin{align}\label{eq:operator_sys}
    \vs &= \mathcal Q^{1/2}\vx = \mathcal F \vu +\mathcal G \vw,
\end{align}
where $\vs$ is the state $\vx$ scaled with the block-diagonal operator $\mathcal Q^{1/2}$ (it has $Q^{1/2}$ on its main diagonal), and $\mathcal F, \mathcal G$ are time-invariant, causal (i.e., lower triangular), Toeplitz operators with Markov parameters $F_i = Q^{1/2}A^{i-1}B_u$ and $G_i = Q^{1/2}A^{i-1}B_w$ for $i>0$, respectively. A linear controller is a mapping from the space of disturbance sequences $\vw$ to the space of control signals sequences $\vu$ denoted by $\vu=\mathcal K \vw$. For a fixed $\mathcal{K}$ and $\vw$, the cost in \eqref{eq:cum_cost} can be written as
\begin{align}\label{eq:cost_OP}
 \cost(\mathcal K,\vw) &= \|\vs\|_2^2 + \|\vu\|_2^2 = \vw^\ast T_{\mathcal K}^\ast T_{\mathcal K} \vw,
\end{align}
 where $T_{\mathcal K}$ is the cost operator
 \begin{align}
    \left[ \begin{array}{c} \vs \\ \vu \end{array} \right] = \underbrace{\left[ \begin{array}{c} \mathcal F \mathcal K+\mathcal G \\ \mathcal K \end{array} \right]}_{\triangleq T_{\mathcal K}} \vw.
    \label{transfer_operator}
\end{align}

For the cost operator $T_{\mathcal K}$, the following fundamental identity is known (e.g. \cite{sabagFIACC,hassibi1999indefinite}),
\begin{align}\label{eq:identity}
    T_{\mathcal K}^\ast T_{\mathcal K} & =  ({\mathcal K} - {\mathcal K}_0)^* (I+\mathcal F^*\mathcal F) (\mathcal K - \mathcal{K}_0) + T_{\mathcal K_0}^*T_{\mathcal K_0}
\end{align}
with $\mathcal K_0 = -(I+\mathcal F^*\mathcal F)^{-1} \mathcal F^\ast \mathcal G$, and
$$T_{\mathcal K_0}^*T_{\mathcal K_0} = \mathcal G^*(I + \mathcal F\mathcal F^*)^{-1}\mathcal G.$$
The identity \eqref{eq:identity} implies that $\mathcal K_0$ is the optimal (linear) non-causal controller. Moreover, it implies that $T_\mathcal K^\ast T_\mathcal K\succeq T_{\mathcal K_0}^*T_{\mathcal K_0}$, so that $\mathcal K_0$ outperforms (in terms of LQR cost) any linear controller $\mathcal K$ {\em for any} $\vw$. Indeed, it can also be shown that $\mathcal K_0$ is optimal among non-linear  controllers as well \cite{sabag2021regret}.

\subsection{Control objectives}
\textbf{$\mathcal H_\infty$ control:} In robust control, the objective is to solve the optimization problem
\begin{align}\label{eq:Hinf}
       \underset{{\text{Strictly Causal}\ \mathcal K}}{\inf} \sup_{\|\vw\|\le 1} \cost(\mathcal K,\vw)&= \underset{{\text{S. Causal} \ \mathcal K}}{\inf}
\|T_\mathcal K \|^2.
\end{align}
This problem is challenging to solve directly and, therefore, its solution is given in terms of the sub-optimal solution \cite{GloverDoyle_book}: for a fixed $\gamma$, find a causal controller $\mathcal K$ (if exists) such that
\begin{align}\label{eq:Hinf_sub}
&\|T_\mathcal K \|^2\le \gamma^2\\
&\iff (\mathcal K - \mathcal K_0)^* (I+\mathcal F^*\mathcal F) (\mathcal K - \mathcal K_0) + T_{\mathcal K_0}^*T_{\mathcal K_0} \preceq \gamma^2 I. \nn
\end{align}
Thus, the solution to the $\mathcal H_\infty$ control problem is given as an iterative solution to the sub-optimal problem above. The second line in \eqref{eq:Hinf_sub} follows from the identity in \eqref{eq:identity} and reveals that the performance of the $\mathcal H_\infty$ controller has an inherent lower bound $T_{\mathcal K_0}^*T_{\mathcal K_0}$, which is independent of the designed controller $\mathcal K$, and follows from the cost of $\mathcal K_0$.

\textbf{Regret-Optimal Control:} In regret-optimal control, the objective is to minimize the cost difference of the designed causal controller and the non-causal controller. This is a competitive criterion and the main idea is to obtain a causal controller whose performance tracks the behaviour of the optimal non-causal controller across all disturbances. The regret-optimal control problem is given by
\begin{align}\label{prob:regret}
    \text{Regret} &=\inf_{\text{S. Causal} \ \mathcal K} \sup_{\|\vw\|\le 1} \left( \cost(\mathcal K,\vw) - \cost(\mathcal K_0 ,\vw)\right) \nn\\
    &=\inf_{\text{S. Causal} \ \mathcal K} \| T_{\mathcal K}^\ast T_{\mathcal K}  -T_{\mathcal K_0}^*T_{\mathcal K_0} \|.
\end{align}
In terms of solution to the regret-optimal control problem it may be surprising that the problem can be solved optimally in terms of a simple formula for the regret and a controller that achieves the optimal regret \cite{sabagFIACC,sabag2021regret}.

\textbf{Competitive-Ratio Control:} The competitive-ratio control problem also has a competitiveness property with respect to the non-causal control, yet, through a ratio. In particular, one aims to find the causal controller which minimizes the ratio between the cost of the causal and the non-causal controller. Formally, it is defined as
\begin{align}\label{eq:prof_CRdef}
    \text{Comp-Ratio}&= \inf_{\text{S. Causal} \ \mathcal K} \sup_{\vw} \frac{\cost(\mathcal K,\vw) }{\cost(\mathcal K_0 ,\vw) }.
\end{align}
For the competitive ratio to be well-defined, we assume that the operator $\mathcal G$ whose corresponding transfer function is $G(e^{j\omega})$ has a full column rank for all $\omega$.

From the defined control objectives, the attempted behavior of each controller becomes transparent. In $\mathcal{H}_{\infty}$, one aims to directly minimize the cost incurred by the controller. On the other hand, in regret-optimal and competitive-ratio control, one aims to minimize the difference and ratio in costs over all disturbances, respectively. It is important to note that that costs that are incurred by the causal and the non-causal controller are evaluated at the same disturbance sequence $\vw$. Thus, $\vw$ plays an active role at comparing the costs either via a difference (regret) or the competitive ratio. At the intuitive level, by taking a supremum over $\vw$, for those disturbances that even the best non-causal controller has a large control cost, the regret-optimal controller may have a higher cost. However, if for certain disturbances a lower cost is attainable for the non-causal controller, then the designed controller should have a lower cost as well. This competitive behavior will be illustrated numerically in Section \ref{sec:numerical}.

\subsection{Riccati and Lyapunov equations}
To present our main results, we utilize the stabilizing solution to the standard LQR Riccati equation
\begin{align}\label{eq:Riccati}
  P &= A^\ast P A + Q - K_{\text{lqr}}^\ast (I + B_u^\ast P B_u )K_{\text{lqr}}
\end{align}
with the state-feedback control law $K_{\text{lqr}} \!\triangleq\! (I \!+\! B_u^{*}PB_u)^{-1}B_u^{*}PA$.
Further, we define the Riccati equations \looseness=-1
\begin{align}\label{eq:Ricc_TM}
      T &= ATA^\ast + B_u B_u^\ast - ATQ^{1/2}R_T^{-1} Q^{1/2} TA^\ast\\
      M&= A_T^\ast M A_T + A_T^\ast Q^{1/2}R_T^{-1}Q^{1/2} A_T - K_M^\ast R_M^{-1}K_M,\nn
\end{align}
with $R_T = I + Q^{1/2}TQ^{1/2}$, $R_M = B_w^\ast Q^{1/2}R_T^{-1}Q^{1/2} B_w + B_w^\ast M B_w$, and $K_M = R_M^{-1}(B_w^\ast MA_T + B_w^\ast R_T^{-1} Q^{1/2}A_T)$. The corresponding closed-loop systems are given by
\begin{align}\label{eq:closed-loop}
    A_K &= A- B_u K_{\text{lqr}}, \ \ A_T = A - AT(Q^{-1}+T)^{-1}\nn\\
    A_M &= A_T - B_wK_M.
\end{align}
The following remark shows how to find the solution to the Riccati equation of $T$ by simpler means.
\begin{remark}
The first Riccati equation in \eqref{eq:Ricc_TM} (with $T$) is the dual Riccati equation to the LQR Riccati in \eqref{eq:Riccati} \cite[App. E.8]{kailath2000linear}. Thus, using \cite[Th. E.8.1]{kailath2000linear}, we can compute directly
\begin{align}
    T = O(I-PO)^{-1},
\end{align}
where $O$ is the solution to the (linear) Lyapunov equation
\begin{align}
O= A_K^\ast O A_K + B_u(I + B_u^\ast PB_u)^{-1}B_u^\ast.
\end{align}
\end{remark}
\begin{remark}\label{remark:M}
If $B_w$ is square, the stabilizing solution to the Riccati equation in \eqref{eq:Ricc_TM} is simply $M=0$. Using this fact, we derive a simplified solution to the competitive-ratio control problem in Theorem \ref{th:square_SC}.
\end{remark}

We also define $Z_\gamma$ and $\Pi$ as the solutions to the following Lyapunov equations
\begin{align}\label{eq:lyapunov}
    Z_\gamma &= A_K Z_\gamma A_K^\ast + \gamma^{-2}B_u (I + B_u^\ast P B_u)^{-1} B_u^\ast\\
    \Pi &= A_K^\ast \Pi A_K + (P-A_K^\ast U)B_wR_M^{-1}B_w^\ast (P-A_K^\ast U)^\ast.\nn
\end{align}
The solution to the first equation, $Z_\gamma$, is parameterized by the scalar $\gamma$, and we will use this equation either with $\gamma^2 = 1$ which results in the solution $Z_1$, or with $\gamma^2 = \lambda_{\max}(Z_1\Pi)$ which results in a solution that is denoted by $Z_\ast$. Finally, we define the Sylvester equation
\begin{align}\label{lya:U}
    U &= A_K^\ast U A_M + P B_wK_M.
\end{align}

\section{Main results}\label{sec:main}
In this section, we present our main results. The following theorem presents the optimal solution for the competitive-ratio control problem in \eqref{eq:prof_CRdef}.
\begin{theorem}[Optimal Competitive-Ratio Control]\label{th:SC}
For the LQR problem, the optimal competitive ratio in \eqref{eq:prof_CRdef} is
\begin{align}\label{eq:th_optCR}
    \emph{Comp-Ratio} &= 1 + \lambda_{\text{max}}(Z_1{\Pi}),
\end{align}
where $Z_1$ and ${\Pi}$ are given by \eqref{eq:lyapunov}.

An optimal competitive-ratio controller that achieves $\emph{Comp-Ratio}$ in \eqref{eq:th_optCR} is given by
\begin{align}\label{eq:th_cont}
    u_t&= - K_{\text{lqr}}x_t \mspace{-3mu} + \mspace{-3mu}(I + B_u^\ast P B_u)^{-1}
    B_u^\ast \begin{pmatrix}
     U & - \Pi
    \end{pmatrix}
    \begin{pmatrix}
    \xi^1_{t}\\
    \xi^2_{t}
    \end{pmatrix}
\end{align}
with
\begin{align}\label{eq:th_SC_intSTate}
    \begin{pmatrix}
    \xi^1_{t+1}\\
    \xi^2_{t+1}
    \end{pmatrix} &= \begin{pmatrix}
    A_T&0\\
     K_\gamma K_M &F_\gamma
    \end{pmatrix}
    \begin{pmatrix}
    \xi^1_{t}\\
    \xi^2_{t}
    \end{pmatrix}  +\begin{pmatrix}
    B_w\\
    K_\gamma
    \end{pmatrix} w_t,
\end{align}
where $U,\Pi,A_T,K_M,Z_\ast$ are defined in \eqref{eq:Ricc_TM}-\eqref{lya:U}, and the constants $K_\gamma,F_\gamma$ are given by
\begin{align}
        K_\gamma &=  (I - A_K Z_\ast A_K^\ast \Pi )^{-1}A_K Z_\ast (P-A_K^\ast U)B_w\nn\\
    F_\gamma &= A_K - K_\gamma R_M^{-1}B_w^\ast (P- U^\ast A_K).
\end{align}
\end{theorem}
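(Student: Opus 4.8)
The plan is to turn the competitive-ratio objective \eqref{eq:prof_CRdef} into a Nehari approximation problem, whose optimal value and optimal approximant admit a known closed-form state-space solution \cite{sabag2021regret}. Starting from the fundamental identity \eqref{eq:identity}, for every disturbance $\vw$ the ratio in \eqref{eq:prof_CRdef} can be written as
\begin{align*}
\frac{\cost(\mathcal K,\vw)}{\cost(\mathcal K_0,\vw)}
= 1 + \frac{\vw^\ast(\mathcal K-\mathcal K_0)^\ast(I+\mathcal F^\ast\mathcal F)(\mathcal K-\mathcal K_0)\vw}{\vw^\ast T_{\mathcal K_0}^\ast T_{\mathcal K_0}\vw}.
\end{align*}
Because $\mathcal G$ has full column rank on the unit circle, the clairvoyant cost operator $\mathcal L\triangleq T_{\mathcal K_0}^\ast T_{\mathcal K_0}=\mathcal G^\ast(I+\mathcal F\mathcal F^\ast)^{-1}\mathcal G$ is boundedly invertible, so maximizing the Rayleigh quotient over $\vw$ converts the inner supremum into an operator norm. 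Introducing the causal, causally invertible (outer) spectral factor $\mathcal M$ with $I+\mathcal F^\ast\mathcal F=\mathcal M^\ast\mathcal M$ (supplied by the LQR Riccati $P$ of \eqref{eq:Riccati}, with closed loop $A_K$), and a causal outer factor $\mathcal W$ with $\mathcal L=\mathcal W^\ast\mathcal W$, one obtains
\begin{align*}
\emph{Comp-Ratio} = 1 + \inf_{\text{S. Causal }\mathcal K}\bigl\|\mathcal M(\mathcal K-\mathcal K_0)\mathcal W^{-1}\bigr\|^2 .
\end{align*}

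Since $\mathcal M,\mathcal W$ and their inverses are causal, the map $\mathcal K\mapsto\mathcal M\mathcal K\mathcal W^{-1}$ is a bijection of the strictly causal operators onto themselves. Hence, defining the fixed operator $\mathcal A\triangleq\mathcal M\mathcal K_0\mathcal W^{-1}=-\mathcal M^{-\ast}\mathcal F^\ast\mathcal G\mathcal W^{-1}$ (using $\mathcal K_0=-\mathcal M^{-1}\mathcal M^{-\ast}\mathcal F^\ast\mathcal G$) and letting $\mathcal B\triangleq\mathcal M\mathcal K\mathcal W^{-1}$ range over all strictly causal operators, the problem reduces to the Nehari problem
\begin{align*}
\emph{Comp-Ratio}-1 = \inf_{\text{S. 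Causal }\mathcal B}\|\mathcal B-\mathcal A\|^2 = \|\Gamma\|^2 ,
\end{align*}
where $\Gamma$ is the Hankel operator induced by the anti-causal part $\{\mathcal A\}_-$, the last equality being Nehari's theorem.

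It remains to realize $\{\mathcal A\}_-$ in state space and compute $\|\Gamma\|$. This is where the two Riccati equations of \eqref{eq:Ricc_TM} enter: $T$ (the dual LQR Riccati of the Remark) together with $M$ produces an explicit outer factor $\mathcal W$ with closed loops $A_T$ and $A_M=A_T-B_wK_M$, which is the novel factorization of the clairvoyant cost; when $B_w$ is square this collapses to $M=0$ (Remark \ref{remark:M}). Composing the anti-causal factors $\mathcal M^{-\ast}\mathcal F^\ast$ with the causal factors $\mathcal G\mathcal W^{-1}$ and extracting the strictly anti-causal part yields a finite-dimensional realization of $\{\mathcal A\}_-$ whose controllability and observability Gramians solve exactly the Lyapunov and Sylvester equations \eqref{eq:lyapunov}--\eqref{lya:U}: the controllability Gramian is $Z_1$, the observability Gramian is $\Pi$, and $U$ couples the two factors. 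Since the squared Hankel norm equals the largest eigenvalue of the product of these Gramians, this gives $\emph{Comp-Ratio}-1=\lambda_{\max}(Z_1\Pi)$, establishing \eqref{eq:th_optCR}.

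Finally, to obtain the controller \eqref{eq:th_cont}--\eqref{eq:th_SC_intSTate}, I substitute the realization of $\{\mathcal A\}_-$ into the closed-form optimal Nehari approximant of \cite{sabag2021regret} evaluated at the optimal level $\gamma^2=\lambda_{\max}(Z_1\Pi)$, so that $Z_\gamma=Z_\ast$; this produces the optimal $\mathcal B^\star$, and inverting the change of variables via $\mathcal K^\star=\mathcal M^{-1}\mathcal B^\star\mathcal W$ and reading off a minimal state-space realization gives the stated controller, with $K_\gamma,F_\gamma$ emerging as the Nehari gains. The main obstacle is precisely the explicit realization of $\{\mathcal A\}_-$ and the verification that its Gramians coincide with $Z_1$ and $\Pi$: because $\mathcal A$ mixes the anti-causal factors $\mathcal M^{-\ast}\mathcal F^\ast$ with the causal factors $\mathcal G\mathcal W^{-1}$, computing $\{\mathcal A\}_-$ requires carefully tracking the interaction of the causal and anti-causal realizations, and the bookkeeping showing that the Gramians reduce to \eqref{eq:lyapunov} and \eqref{lya:U} is the heaviest part of the argument.
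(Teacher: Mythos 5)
Your proposal is correct and takes essentially the same route as the paper: the fundamental identity \eqref{eq:identity}, the two outer spectral factorizations (your $\mathcal M$ and $\mathcal W$ are the paper's $\Delta$ and $\mathcal M$), the change of variables turning the sub-optimal problem into a Nehari problem, the Hankel-norm evaluation $\lambda_{\max}(Z_1\Pi)$ via the Gramians of the anti-causal part, and the controller recovered from the closed-form Nehari approximant of \cite{sabag2021regret} at $\gamma^2=\lambda_{\max}(Z_1\Pi)$. The bookkeeping you defer as the heaviest step is precisely what the paper executes in Lemmas \ref{lemma:nabla}--\ref{lemma:our_nehari} (the clairvoyant factorization via the Riccati equations for $T$ and $M$, the decomposition with the Sylvester variable $U$, and the final state-space simplification showing one internal state coincides with $x_t$).
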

\begin{remark}\label{remark:state_sufficient}
From operational perspective, the controller in Theorem \ref{th:SC} has the advantage that it can be implemented with access to the system states only rather than the disturbances. This fact can be seen from \eqref{eq:th_SC_intSTate} since the evolution of the internal state is as a function of $B_ww_t = x_{t+1}-Ax_t-B_u u_t$.
\end{remark}
The optimal competitive ratio in \eqref{eq:th_optCR} can be computed directly as the maximal eigenvalue value of a finite-dimensional matrix. Furthermore, \eqref{eq:th_cont} provides an explicit controller which is composed of the LQR state-feedback law and a controller whose evolution is given in \eqref{eq:th_SC_intSTate}. This resolves the competitive-ratio control problem that was introduced in~\cite{goel2021competitive}.

To the best of our knowledge, the controller in Theorem~\ref{th:SC} and the regret-optimal controller in \cite{sabag2021regret} are the only explicit solutions for control problems in the robust control paradigm. This is a departure from the sub-optimal solution that exists for the robust $\mathcal H_\infty$ controller. In the next section, we present a key observation behind our optimal solution to the competitive-ratio control problem. This sheds light onto the explicit solution derived in Section \ref{sec:state-space} and draw some underlying connections with the regret-optimal control framework.

The following result is for the special case where $B_w$ is a square matrix. We present this special case as a separated result since the optimal competitive-ratio controller is simplified significantly due to the fact that $M=0$ (see Remark \ref{remark:M}).
\begin{theorem}[Optimal Competitive-Ratio Control: Square $B_w$]\label{th:square_SC}
For the setting in Theorem \ref{th:SC} with a square $B_w$, the optimal competitive-ratio is given by
\begin{align}\label{eq:th_sq_optCR}
    \emph{Comp-Ratio} &= 1 + \lambda_{\text{max}}(Z_1\overline{\Pi}),
\end{align}
where $Z_1$ is given in \eqref{eq:lyapunov}, and $\overline{\Pi}$ solves
\begin{align}
    \overline{\Pi}&= A_k^\ast \overline{\Pi} A_k + (P - A_K^\ast PA_T)(Q^{-1}+T) (P - A_T^\ast P A_K).\nn
\end{align}
Let $\overline{Z}_\ast$ solve the Lyapunov equation in \eqref{eq:lyapunov} with $\gamma^2 \!=\! \lambda_{\text{max}}(Z_1\overline{\Pi})$, then an optimal competitive-ratio controller is given by \looseness=-1
\begin{align*}
    u_t&= - K_{\text{lqr}}x_t + (I + B_u^\ast P B_u)^{-1}
    B_u^\ast \begin{pmatrix}
     P A_T & - \overline{\Pi}
    \end{pmatrix}
    \begin{pmatrix}
    \xi^1_{t}\\
    \xi^2_{t}
    \end{pmatrix},
\end{align*}
where the controller states evolve by the dynamical system
\begin{align}
    \begin{pmatrix}
    \xi^1_{t+1}\\
    \xi^2_{t+1}
    \end{pmatrix}
    &= \begin{pmatrix}
    A_T&0\\
     \overline{K}_\gamma A_T & \overline{F}_\gamma
    \end{pmatrix}
    \begin{pmatrix}
    \xi^1_{t}\\
    \xi^2_{t}
    \end{pmatrix} + \begin{pmatrix}
    I\\
    \overline{K}_\gamma
    \end{pmatrix}B_w w_t.
\end{align}
The constants that appear above are given in \eqref{eq:Riccati}-\eqref{eq:closed-loop} and
\begin{align}\label{eq:th_SC_KF}
    \overline{K}_\gamma &=  (I - A_K \overline{Z}_\ast A_K^\ast \overline{\Pi} )^{-1}A_K \overline{Z}_\ast (P - A_K^\ast PA_T)\\
    \overline{F}_\gamma &= A_K - \overline{K}_\gamma Q^{-/2}R_TQ^{-/2} (P - A_T^\ast PA_K).\nn
\end{align}
\end{theorem}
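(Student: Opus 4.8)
The plan is to derive Theorem \ref{th:square_SC} as a direct specialization of the general solution in Theorem \ref{th:SC}, exploiting the fact recorded in Remark \ref{remark:M} that the stabilizing solution of the second Riccati equation in \eqref{eq:Ricc_TM} is $M=0$ whenever $B_w$ is square. Since $B_w$ is assumed full column rank and is now square, it is invertible, and the entire argument reduces to tracking how the quantities $R_M$, $K_M$, $A_M$, $U$, $\Pi$, and the controller gains $K_\gamma, F_\gamma$ collapse once $M=0$ is inserted, followed by checking that the resulting expressions coincide with those in the statement.

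First I would record the algebraic consequences of $M=0$. Substituting into the definitions that follow \eqref{eq:Ricc_TM} gives $R_M = B_w^\ast Q^{1/2} R_T^{-1} Q^{1/2} B_w$, which is invertible, and a short computation using $B_w^{-1}$ together with $R_T = I + Q^{1/2}TQ^{1/2}$ yields the two identities
\begin{align*}
B_w R_M^{-1} B_w^\ast &= Q^{-1/2} R_T Q^{-1/2} = Q^{-1}+T, \\
B_w K_M &= A_T.
\end{align*}
The second identity is the crucial one: it forces the closed-loop matrix $A_M = A_T - B_w K_M = 0$, which in turn trivializes the Sylvester equation \eqref{lya:U}, since $U = A_K^\ast U A_M + P B_w K_M = P A_T$. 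Thus in the square case the operator $U$ of Theorem \ref{th:SC} is replaced by the explicit matrix $P A_T$.

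Next I would substitute these simplifications into the data of Theorem \ref{th:SC}. Using $U = P A_T$ and $B_w R_M^{-1} B_w^\ast = Q^{-1}+T$ directly in the Lyapunov equation for $\Pi$ in \eqref{eq:lyapunov} converts it into the stated equation for $\overline{\Pi}$, whence $\Pi = \overline{\Pi}$; consequently the optimal competitive ratio $1 + \lambda_{\max}(Z_1 \Pi)$ becomes $1 + \lambda_{\max}(Z_1 \overline{\Pi})$, and the scalar $\gamma^2 = \lambda_{\max}(Z_1\Pi)$ that defines $Z_\ast$ is unchanged, so $\overline{Z}_\ast = Z_\ast$. For the state-space controller I would carry out the one remaining bookkeeping step, namely the reparametrization $K_\gamma = \overline{K}_\gamma B_w$ (which is forced by comparing the definition of $K_\gamma$ with $U=PA_T$). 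With it, the $\xi^1$-dynamics $\xi^1_{t+1} = A_T \xi^1_t + B_w w_t$ are written as $I\cdot B_w w_t$, the cross term becomes $K_\gamma K_M = \overline{K}_\gamma B_w K_M = \overline{K}_\gamma A_T$, the input gain becomes $K_\gamma w_t = \overline{K}_\gamma B_w w_t$, and $F_\gamma = A_K - K_\gamma R_M^{-1} B_w^\ast (P - U^\ast A_K)$ becomes $\overline{F}_\gamma = A_K - \overline{K}_\gamma (Q^{-1}+T)(P - A_T^\ast P A_K)$, which matches \eqref{eq:th_SC_KF} after writing $Q^{-1}+T = Q^{-1/2}R_T Q^{-1/2}$. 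The output map $(U, -\Pi) = (P A_T, -\overline{\Pi})$ then coincides with the one in the statement, completing the identification with \eqref{eq:th_SC_intSTate}.

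I expect the main obstacle to be the careful verification of the identity $B_w K_M = A_T$ from the precise form of $K_M$ in \eqref{eq:Ricc_TM}, since everything downstream (the vanishing of $A_M$, the closed form $U=PA_T$, and the collapse of $\Pi$ to $\overline{\Pi}$) hinges on it, and then confirming that the single reparametrization $K_\gamma = \overline{K}_\gamma B_w$ is \emph{simultaneously} consistent across the cross term, the input gain, and the feedback matrix $F_\gamma$, so that the realization in Theorem \ref{th:square_SC} realizes exactly the same controller as the specialization of \eqref{eq:th_SC_intSTate}. The stabilizing nature of $M=0$ may be invoked from Remark \ref{remark:M}; granting it, the remaining steps are routine substitutions into the formulas of Theorem \ref{th:SC}.
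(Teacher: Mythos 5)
Your proposal is correct, but it takes a genuinely different route from the paper's. You derive Theorem \ref{th:square_SC} by specializing the \emph{final formulas} of Theorem \ref{th:SC}: invoking Remark \ref{remark:M} ($M=0$ is stabilizing when $B_w$ is square), you establish $B_wR_M^{-1}B_w^\ast = Q^{-/2}R_TQ^{-/2} = Q^{-1}+T$ and $B_wK_M = A_T$, whence $A_M = 0$, the Sylvester equation \eqref{lya:U} degenerates to $U = PB_wK_M = PA_T$, the Lyapunov equation for $\Pi$ in \eqref{eq:lyapunov} becomes exactly the stated equation for $\overline{\Pi}$ (with uniqueness, since $A_K$ is stable, giving $\Pi=\overline{\Pi}$ and hence $Z_\ast=\overline{Z}_\ast$), and the single reparametrization $K_\gamma = \overline{K}_\gamma B_w$ is consistent simultaneously in the cross term $K_\gamma K_M = \overline{K}_\gamma A_T$, the input gain, and $F_\gamma = \overline{F}_\gamma$ --- all of which checks out. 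The paper instead re-runs the whole operator pipeline in the square case: Lemma \ref{lemma:M} takes the spectral factor directly as $M(z) = z\nabla^{-1}(z)G(z)$, bypassing the $M$-Riccati entirely and yielding the FIR inverse $M^{-1}(z) = B_w^{-1}(I - z^{-1}A_T)Q^{-/2}R_T^{1/2}$; Lemmas \ref{lemma:decomposition_sq} and \ref{lemma:our_nehari_sq} redo the causal/anticausal decomposition and the Nehari solution with this simpler factor; and the proof multiplies out $\Delta^{-1}(z)(\overline{K}'(z)+\overline{C}(z))M(z)$ into a state-space realization. What the paper's route buys is that the remark following Theorem \ref{th:square_SC} --- only the LQR Riccati plus linear equations are needed --- is manifest, since the second Riccati equation never enters; what your route buys is brevity, avoiding any re-derivation of the decomposition and Nehari data at the price of leaning on the general-case machinery. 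One reconciliation note on the very step you flagged as the main obstacle: the identity $B_wK_M = A_T$ holds for $K_M = R_M^{-1}(B_w^\ast MA_T + B_w^\ast Q^{1/2}R_T^{-1}Q^{1/2}A_T)$ as written in the proof of Lemma \ref{lemma:M_general}; the display after \eqref{eq:Ricc_TM} omits a $Q^{1/2}$ (an apparent typo), and with that literal form the identity would fail, so your verification must use the appendix version --- with it, your specialization is sound and reproduces the theorem exactly.
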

\begin{remark}
Similar to \cite{goel2021competitive}, we note that in the case of square $B_w$\footnote{If $B_w$ is a square matrix, our assumption that $B_w$ has a full column-rank implies that $B_w$ is also invertible.}, the optimal competitive ratio is independent of $B_w$.
\end{remark}
Note that the structures of the controllers given in Theorems \ref{th:SC} and \ref{th:square_SC} are similar. However, the main simplification of Theorem \ref{th:square_SC} lies in the constants computation. In light of Remarks $1-2$, when $B_w$ is square, we only need to solve the LQR Riccati equation in \eqref{eq:Riccati} and linear matrix equations.

In the scalar case, the controller in Theorem \ref{th:square_SC} simplifies further and leads to the following surprising result.
\begin{theorem}[Scalar systems]\label{th:scalar}
For scalar systems (i.e., $A,B_u,B_w,Q\in\mathbb{R}$), the optimal competitive-ratio in Theorem~\ref{th:SC} simplifies to
\begin{align}
    \emph{Comp-Ratio} &= 1 + \frac{B_u^2 P^2(Q^{-1}+P)}{1+B_u^2P}.
\end{align}
Moreover, the competitive-ratio optimal controller is
\begin{align}
    u_t&= - K_{\text{lqr}} x_t.
\end{align}
In other words, for scalar systems, the $\mathcal{H}_2$ controller with state-feedback law attains the optimal competitive ratio.
\end{theorem}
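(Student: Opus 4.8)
The plan is to obtain Theorem~\ref{th:scalar} in two independent strokes: first specialize the square-$B_w$ result of Theorem~\ref{th:square_SC} to scalars to pin down the \emph{optimal value}, and then verify directly that the static law $u_t=-K_{\text{lqr}}x_t$ \emph{attains} it. Since a scalar $B_w$ is trivially square (hence invertible), Theorem~\ref{th:square_SC} applies verbatim, and because every matrix is now a number, the maximal eigenvalue collapses to an ordinary product, so that $\emph{Comp-Ratio}=1+Z_1\overline\Pi$. It therefore remains to evaluate the two scalar Lyapunov solutions in closed form and to exhibit a controller meeting this bound.

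For the value, I would first record the scalar closed-loop gain $A_K=A/(1+B_u^2P)$ from \eqref{eq:closed-loop}, and then solve the scalar (geometric) Lyapunov recursions in \eqref{eq:lyapunov} explicitly, e.g. $Z_1=\dfrac{B_u^2/(1+B_u^2P)}{1-A_K^2}$, together with the analogous expression for $\overline\Pi$ built from $A_T=A/(1+QT)$ and the factor $P-A_KPA_T$. The single nontrivial ingredient is the scalar LQR identity coming from \eqref{eq:Riccati}, namely $P=Q+\dfrac{A^2P}{1+B_u^2P}$ (equivalently $\tfrac{A^2P}{1+B_u^2P}=P-Q$), together with the companion identity for $T$; substituting these into $1+Z_1\overline\Pi$ and clearing denominators is then routine algebra that collapses the expression to the closed form in the statement.

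For the controller I would argue directly in the frequency domain, which also explains \emph{why} pure state feedback is optimal here. Writing the LQR law as the causal operator $\mathcal K_{\text{lqr}}$ with symbol $-K_{\text{lqr}}B_w/(z-A_K)$, a short computation shows the open-loop pole cancels, so $\mathcal F\mathcal K_{\text{lqr}}+\mathcal G$ has symbol $Q^{1/2}B_w/(z-A_K)$, and hence the symbol of $T_{\mathcal K_{\text{lqr}}}^\ast T_{\mathcal K_{\text{lqr}}}$ is $B_w^2(Q+K_{\text{lqr}}^2)/|z-A_K|^2$ while that of $T_{\mathcal K_0}^\ast T_{\mathcal K_0}$ from \eqref{eq:identity} is $QB_w^2/(|z-A|^2+QB_u^2)$. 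The crux is that their ratio, $\tfrac{(Q+K_{\text{lqr}}^2)(|z-A|^2+QB_u^2)}{Q\,|z-A_K|^2}$, is \emph{frequency independent}: matching the coefficients of $\cos\omega$ and invoking the same scalar Riccati identity shows $(|z-A|^2+QB_u^2)/|z-A_K|^2\equiv 1+B_u^2P$. Hence $T_{\mathcal K_{\text{lqr}}}^\ast T_{\mathcal K_{\text{lqr}}}$ is an exact scalar multiple of $T_{\mathcal K_0}^\ast T_{\mathcal K_0}$, so the Rayleigh quotient in \eqref{eq:prof_CRdef} is constant over all $\vw$ and equals that multiple; checking that this multiple coincides with the $1+Z_1\overline\Pi$ computed above closes the argument and identifies $-K_{\text{lqr}}x_t$ as optimal.

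The main obstacle is this frequency-independence (``all-pass'') identity, since it is the structural fact behind the surprising optimality of $\mathcal H_2$ state feedback; everything else is bookkeeping. In particular I would take care that the two routes return the \emph{same} scalar—one through $Z_1\overline\Pi$ and one through the flat symbol ratio—because their agreement is precisely what upgrades ``$\mathcal K_{\text{lqr}}$ attains the bound'' to ``$\mathcal K_{\text{lqr}}$ is optimal,'' and I would double-check the scalar reductions of $T$ and $\overline\Pi$ against the invertible-$B_w$ simplifications invoked in Remark~\ref{remark:M}.
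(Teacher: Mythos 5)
Your proposal is correct in its architecture, and for the \emph{value} it follows the paper's own route exactly: the paper likewise specializes Theorem~\ref{th:square_SC} to scalars and solves the Lyapunov equations in \eqref{eq:lyapunov} in closed form using the scalar Riccati identities. For the \emph{controller}, however, you take a genuinely different path. The paper stays inside the Theorem~\ref{th:square_SC} machinery: it computes $\overline{Z}_\ast$ and $\overline{K}_\gamma$ explicitly, observes that $\overline{F}_\gamma=0$, and then shows that the internal signal $s_t \triangleq PA_K\xi_t^1-\overline{\Pi}\xi_t^2$ vanishes identically because $PA_K-\overline{\Pi}\,\overline{K}_\gamma=0$, so the dynamic correction contributes nothing and only $u_t=-K_{\text{lqr}}x_t$ survives. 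Your frequency-domain argument instead proves the operator proportionality $T_{\mathcal K_{\text{lqr}}}^\ast T_{\mathcal K_{\text{lqr}}}=c\,T_{\mathcal K_0}^\ast T_{\mathcal K_0}$ via the pole cancellation $F(z)K(z)+G(z)=Q^{1/2}\caus{A_K}B_w$ for $K(z)=-K_{\text{lqr}}\caus{A_K}B_w$, together with the identity $|z-A|^2+QB_u^2=(1+B_u^2P)\,|z-A_K|^2$ on the unit circle; the latter is exactly the scalar canonical factorization of Lemma~\ref{lemma:Delta}, since $\Delta(z)=(1+B_u^2P)^{1/2}(z-A_K)/(z-A)$, so that step is solid. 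Your route buys an explanation of \emph{why} $\mathcal H_2$ state feedback is optimal here (the LQR cost spectrum is an exact multiple of the clairvoyant's, making the Rayleigh quotient flat), at the price of having to match the flat constant against the Theorem~\ref{th:square_SC} value; the paper's route is pure substitution and never exposes this all-pass structure.

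One concrete warning: if you carry out your algebra, the two routes \emph{do} agree with each other, but not with the closed form displayed in the theorem. Your flat constant is $c=(Q+K_{\text{lqr}}^2)(1+B_u^2P)/Q=1+B_u^2P^2/Q$, using $K_{\text{lqr}}^2=B_u^2P(P-Q)/(1+B_u^2P)$. On the value side, in scalars $QT=B_u^2P$ (both quantities solve $X=A^2X/(1+X)+QB_u^2$ and both are stabilizing), hence $A_T=A/(1+QT)=A_K$ and $Q^{-1}+T=(1+B_u^2P)/Q$; Theorem~\ref{th:square_SC}'s Lyapunov equation, whose middle factor is $(Q^{-1}+T)$, then yields $\overline{\Pi}=P^2(1-A_K^2)(Q^{-1}+T)$ and $Z_1\overline{\Pi}=B_u^2P^2/Q$, in agreement with $c$. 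This equals the stated $1+\tfrac{B_u^2P^2(Q^{-1}+P)}{1+B_u^2P}$ only when $Q=B_u^2$: the paper's scalar proof writes $\overline{\Pi}=P^2(Q^{-1}+P)(1-A_K^2)$, i.e., it substitutes $T\mapsto P$ where its own Theorem~\ref{th:square_SC} has $Q^{-1}+T$ (with that substitution one also gets $\overline{F}_\gamma=A_K\bigl(1-(Q^{-1}+T)/(Q^{-1}+P)\bigr)\neq 0$, contradicting the proof's own use of $\overline{F}_\gamma=0$). So your promise that the algebra ``collapses to the closed form in the statement'' will not literally happen; the two-route consistency check you insist on is precisely what detects this, and the value you should report is $1+B_u^2P^2/Q$, equivalently $1+\tfrac{B_u^2P^2(Q^{-1}+T)}{1+B_u^2P}$. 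A numerical sanity check: $A=B_u=B_w=1$, $Q=2$ gives $P=1+\sqrt{3}$, your flat ratio $1+P^2/2\approx 4.73$, whereas the displayed formula evaluates to $\approx 7.46$; since your controller provably achieves the flat ratio, the displayed value cannot be the optimum, and the optimality claim $u_t=-K_{\text{lqr}}x_t$ is confirmed with the corrected constant.
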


\section{Main ideas and Extensions}\label{sec:op}
In this section, we present the derivation of the competitive-ratio optimal controller at the operator level. We then reveal an underlying similarity between competitive ratio and regret by studying a regret-optimal control problem with weight functions. We start by describing a fundamental problem that lies at the heart of competitive-ratio control.
\begin{problem}[Nehari Problem \cite{nehari1957bounded}]\label{prob:nehari} Given a strictly anti-causal (strictly upper triangular) operator $U$, find a causal (lower triangular) operator $L$, such that $\|L-U\|$ is minimized.
\end{problem}
The Nehari problem seeks the best causal approximation for a strictly anti-causal operator in the operator norm sense. The problem has been solved, and the minimal norm can be characterized by the Hankel norm of $U$ \cite{nehari1957bounded}. In the case of an operator $U$ that can be described with a state-space, the Hankel norm can be computed explicitly and we can also characterize the state-space representation of the approximation $L$ \cite{sabag2021regret}. We proceed to show that competitive-ratio control can be solved optimally using Problem \ref{prob:nehari}.
\subsection{Optimal Competitive-ratio Control}
The competitive-ratio control problem is defined as
\begin{align}\label{eq:CR_start}
     \text{Comp-Ratio}&= \inf_{\text{S. causal}\ \mathcal K }\sup_{\vw} \frac{\vw^\ast T_{\mathcal K}^\ast T_{\mathcal K} \vw}{\vw^\ast T_{\mathcal K_0}^*T_{\mathcal K_0} \vw}.
\end{align}
We derive the optimal solution to \eqref{eq:CR_start} via the corresponding sub-optimal problem: For a fixed $\gamma\in \mathbb{R}^+$, find a strictly causal controller $\mathcal K$ (if exists) such that
\begin{align}\label{eq:cr_sub0}
    &\sup_{\vw} \frac{\vw^\ast T_{\mathcal K}^\ast T_{\mathcal K} \vw}{\vw^\ast T_{\mathcal K_0}^*T_{\mathcal K_0} \vw}\le \gamma^2.
\end{align}
The minimal value of $\gamma^2$ such that \eqref{eq:cr_sub0} is satisfied is equal to the optimal competitive ratio in \eqref{eq:CR_start}. To quantify this minimal value in \eqref{eq:cr_sub0}, we strive to formulate \eqref{eq:cr_sub0} as a Nehari problem since it implies that the problem can be solved optimally. We start by writing \eqref{eq:cr_sub0} as
\begin{align}\label{eq:cr_sub}
    & \vw^\ast T_{\mathcal K}^\ast T_{\mathcal K} \vw\le \gamma^2 \vw^\ast T_{\mathcal K_0}^*T_{\mathcal K_0} \vw, \ \  \forall \vw.
\end{align}
The following chain of problems are equivalent to \eqref{eq:cr_sub}
\begin{align}
    &\vw^\ast (\mathcal K - \mathcal K_0)^* (I+\mathcal F^*\mathcal F) (\mathcal K - \mathcal K_0) \vw\nn\\
    &\ \ \le (\gamma^2-1) \vw^\ast \mathcal G^*(\mathcal I + \mathcal F\mathcal F^*)^{-1}\mathcal G \vw, \ \ \ \label{eq:cr_eq_1}\\
    & \vw^\ast (\Delta \mathcal K - \Delta \mathcal K_0)^* (\Delta \mathcal K - \Delta \mathcal K_0) \vw\nn\\
    &\ \ \le (\gamma^2-1) \vw^\ast \mathcal M^\ast \mathcal M \vw, \ \ \ \label{eq:cr_eq_4},
\end{align}
where \eqref{eq:cr_eq_1} follows from the fundamental identity in \eqref{eq:identity}, and \eqref{eq:cr_eq_4} follows from the canonical factorizations $\Delta^\ast \Delta = I+\mathcal F^*\mathcal F$ and $\mathcal M^\ast \mathcal M = G^*(\mathcal I + \mathcal F\mathcal F^*)^{-1}\mathcal G$. These factorizations necessitate that $\mathcal M,\Delta$ are causal operators, and their inverses are causal and bounded. \looseness=-1

We now assume that $\mathcal M$ is a bounded operator (a fact that will be shown formally below) which implies that we can change the disturbance variable as $\vw' = \mathcal M \vw$, and conclude that the competitive-ratio control problem in \eqref{eq:cr_sub0} is equivalent to \looseness=-1
\begin{align}
    & \vw^\ast (\Delta \mathcal K \mathcal M^{-1} - \Delta \mathcal K_0\mathcal M^{-1})^* (\Delta \mathcal K \mathcal M^{-1}- \Delta \mathcal K_0 \mathcal M^{-1}) \vw\nn\\
    &\ \ \le (\gamma^2-1) \vw^\ast \vw, \ \forall \vw. \label{eq:cr_eq_5}
\end{align}

The problem in \eqref{eq:cr_eq_5} resembles the Nehari problem in Problem \ref{prob:nehari}. In particular, divide both sides with $\vw^\ast \vw$ and take a supremum over $\vw$ to obtain
\begin{align}\label{eq:der_some}
 \|\Delta \mathcal K \mathcal M^{-1} - \Delta \mathcal K_0\mathcal M^{-1}\|^2\le \gamma^2-1.
\end{align}
As $\Delta$ and $\mathcal M^{-1}$ are causal operators, the product $\Delta \mathcal K \mathcal M^{-1}$ is strictly causal. On the other hand, the operator $\Delta \mathcal K_0\mathcal M^{-1}$ is neither causal or anti-causal since $\mathcal K_0$ is a non-causal mapping. Thus, we decompose this operator into its anti-causal and strictly causal counterparts
\begin{align}\label{eq:some_label}
    \Delta \mathcal K_0\mathcal M^{-1}&= \{\Delta \mathcal K_0\mathcal M^{-1}\}_+ + \{\Delta \mathcal K_0\mathcal M^{-1}\}_-.
\end{align}
By defining the strictly causal operator $\mathcal K' = \Delta \mathcal K \mathcal M^{-1} - \{\Delta \mathcal K_0\mathcal M^{-1}\}_+$, we obtain that \eqref{eq:der_some} is the Nehari problem
\begin{align}\label{eq:op_comp_final_nehari}
    \inf_{\text{S. causal}\ \mathcal K' } \| \mathcal K' - \{\Delta \mathcal K_0\mathcal M^{-1}\}_-\|^2,
\end{align}
and if $\mathcal K'$ is the solution to \eqref{eq:op_comp_final_nehari}, then an optimal competitive-ratio controller is obtained as
\begin{align}\label{eq:op_optcon}
 \mathcal K = \Delta^{-1}(\mathcal K' + \{\Delta \mathcal K_0\mathcal M^{-1}\}_+)\mathcal M.
\end{align}
Note that the constant on the right hand side of \eqref{eq:cr_eq_5} is $\gamma^2-1$ and, therefore, the optimal competitive ratio in \eqref{eq:cr_sub0} is equal to the minimal value of the Nehari problem in \eqref{eq:op_comp_final_nehari} plus $1$. This is compatible with operational definition of competitive ratio whose trivial lower bound is~$1$.

\subsection{Regret-Optimal Control with Weights}
In this section, we present a generalization of the regret-optimal control problem to have weight functions. This generalization is useful for practical scenarios where domain-knowledge aim at attenuating particular disturbances, control signals or systems' states. From analytical perspective, we show that the problem can be solved optimally and that the competitive-ratio control problem is an instance of regret-optimal control with particular weight functions.

Recall that the regret-optimal control problem in \eqref{prob:regret} is
\begin{align}
    \inf_{\mathcal K} \|T_{\mathcal K}^*T_{\mathcal K} - T_{\mathcal K_0}^*T_{\mathcal K_0}\|.
\end{align}

In a similar vein to $H_\infty$ control \cite{zames1981feedback}, we can introduce weighting functions to the regret-optimal control problem. Define three positive semidefinite operators: $\mathcal W_{\vs}\succeq 0$ is a weight on the state signal $\vs$, $\mathcal W_{\vu}\succeq 0$ is a weight for the control signal $\vu$, and $\mathcal W_{\vw}\succeq 0$ is a weight on the disturbance $\vw$. The generalized regret-optimal control problem with weights is defined through its sub-optimal problem
\begin{align}\label{eq:regret_weights}
&\vw^\ast \left(T_{\mathcal K}^*  \begin{pmatrix}
\mathcal W_{\vs} & 0 \\
0 & \mathcal W_{\vu}
\end{pmatrix} T_{\mathcal K} - T_{\mathcal K_0}^* \begin{pmatrix}
\mathcal W_{\vs} & 0 \\
0 & \mathcal W_{\vu}
\end{pmatrix}T_{\mathcal K_0}\right)\vw \nn\\
&\ \ \le \gamma^2 \vw^\ast \mathcal W_{\vw}\vw, \ \ \ \forall \vw.
\end{align}
The motivation for weight functions is from practical considerations, where one aims to attenuate the response of particular disturbances, control signals or state signals given particular system requirements. The corresponding operational problem minimizes the difference between weighted costs of the causal controller $\mathcal K$ and the non-causal controller $\mathcal K_0$ with respect to the weighted disturbance $\vw' = \mathcal W_{\vw}^{1/2}\vw $, where for an operator $\mathcal X \succeq 0$, we write $\mathcal X^{1/2}$ as the causal factor from the factorization $\mathcal X = \mathcal X^{1/2}\mathcal X^{\ast/2}$ whose inverse is bounded.

We proceed to show that \eqref{eq:regret_weights} can be formulated as a regret-optimal control problem. Define the cost operator
\begin{align}
    \overline{T}_{\mathcal K} &\triangleq \begin{pmatrix}
\mathcal W_{\vs}^{1/2} & 0 \\
0 & \mathcal W_{\vu}^{1/2}
\end{pmatrix}\begin{pmatrix}
\mathcal F\mathcal  K+\mathcal G \\
\mathcal K
\end{pmatrix} \mathcal W_{\vw}^{-/2} \nn\\
&= \begin{pmatrix}
\mathcal W_{\vs}^{1/2}\mathcal F\mathcal K\mathcal W_{\vw}^{-/2} + \mathcal W_{\vs}^{1/2} \mathcal G \mathcal W_{\vw}^{-/2} \\
\mathcal W_{\vu}^{1/2} \mathcal K \mathcal W_{\vw}^{-/2}
\end{pmatrix},
\end{align}
and note that with the change of variables $\overline{\mathcal  K} = \mathcal W_{\vu}^{1/2} \mathcal  K \mathcal  W_{\vw}^{-/2}$, $\overline{\mathcal  F} = \mathcal  W_{\vs}^{1/2}\mathcal F \mathcal W_{\vu}^{-/2}$, and $\overline{\mathcal G} = \mathcal W_{\vs}^{1/2} \mathcal G \mathcal W_{\vw}^{-/2},$
we obtain the regret-optimal control problem \looseness=-1
\begin{align}\label{eq:regret_modified}
    \inf_{\overline{\mathcal K}} \|\overline{T}_{\overline{\mathcal K}}^\ast\overline{T}_{\overline{\mathcal K}} - \overline{T}_{\mathcal K_0}^\ast\overline{T}_{\mathcal K_0}\|
\end{align}
with the modified system $(\overline{\mathcal F},\overline{\mathcal G})$. The advantage of this reduction is that the regret-optimal control problem can be solved optimally \cite{sabagFIACC,sabag2021regret}. By letting $\overline{\mathcal  K}$ to be the optimal solution to \eqref{eq:regret_modified}, the optimal controller in \eqref{eq:regret_weights} is computed as
\begin{align}\label{eq:weighted_controller}
\mathcal K&= \mathcal W_{\vu}^{-/2} \overline{\mathcal  K} \mathcal  W_{\vw}^{1/2}.
\end{align}


The relation between the competitive-ratio control and regret-optimal control problems becomes transparent. If we choose in the regret-optimal control problem the weights $\mathcal W_{\vs} = \mathcal W_{\vu}$ as the identity operator, and $\mathcal W_{\vw} = T_{\mathcal K_0}^\ast T_{\mathcal K_0}$, then it becomes the optimal competitive-ratio control problem. This demonstrates the flexibility of the generalized regret-optimal control framework presented here along with its advantageous explicit and optimal solution. On the other hand, note that one should compute explicitly the factorization $\mathcal W_{\vw}^{1/2}\mathcal W_{\vw}^{\ast/2}= \mathcal W_{\vw}$ such that $\mathcal W_{\vw}^{1/2}$ is causal and its inverse is bounded.

\begin{table*}[t]

  \caption{Performance of the controllers in different control systems. The highlighted values indicate the smallest value for each metric, i.e. column, excluding the non-causal controller. Each controller achieves the optimal performance, the lowest value, corresponding to their design metric \eqref{eq:ex_TkFrob}-\eqref{eq:ex_comp}. }
  \label{table_all}
\resizebox{\textwidth}{!}{{\begin{tabular}{ c|
c c c c | c c c c | c c c c |
}
  &
  \multicolumn{4}{|c|}{\textbf{HE1\cite{leibfritz2003description}}} & \multicolumn{4}{|c|}{\textbf{AC12\cite{leibfritz2003description}}} & \multicolumn{4}{|c|}{\textbf{HE4\cite{leibfritz2003description}}} \\ \hline
                  & $\mathbf{\|T_K\|_F^2}$ & $\mathbf{\|T_K\|^2}$ & \textbf{Regret} & \textbf{Comp-Ratio}& $\mathbf{\|T_K\|_F^2}$ & $\mathbf{\|T_K\|^2}$ & \textbf{Regret} & \textbf{Comp-Ratio} & $\mathbf{\|T_K\|_F^2}$ & $\mathbf{\|T_K\|^2}$ & \textbf{Regret} & \textbf{Comp-Ratio}\\
                  \hline
 Noncausal &
   $0.40\!\times\!10^0$ & $8.99\!\times\!10^1$ & 0 & 1 & $6.65\!\times\!10^2$ & $8.29\! \times\!10^4$ & 0& 1& $5.16\times10^3$ & $3.62\times10^5$ &0& 1\\

  $\mathcal H_2$ &
  $\mathbf{1.09\!\times\!10^0}$  &  $3.11\!\times\!10^2$  & $2.21\!\times\!10^2$ & $3.46\!\times\!10^0$ &  $\mathbf{2.76 \!\times\!10^3}$ & $1.92\!\times\!10^5$ & $1.27\!\times\!10^5$ & $7.95\!\times\!10^2$ & $\mathbf{3.24\!\times\!10^4}$ & $1.95\!\times\!10^6$ & $1.62\!\times\!10^6$ & $6.79\!\times\!10^2$ \\

 $\mathcal H_\infty$ &
 $1.31\!\times\!10^2$ & $\mathbf{1.31\!\times\!10^2}$  & $1.31\!\times\!10^2$ &  $1.19\!\times\!10^5$ & $8.29\!\times\!10^4$ & $\mathbf{8.29\!\times\!10^4}$ & $8.29\!\times\!10^4$ & $2.78\!\times\!10^6$ & $6.70\!\times\!10^5$ & $\mathbf{6.59\!\times\!10^5}$  & $6.59\!\times\!10^5$  & $2.67\!\times\!10^6$  \\

 Regret-optimal &
   $7.19\!\times\!10^1$ &  $1.61\!\times\!10^2$ &  $\mathbf{7.23\!\times\!10^1}$ & $6.40 \!\times\! 10^4$ & $3.99 \!\times\! 10^4$ & $1.21\!\times\!10^5$ & $\mathbf{3.97\!\times\!10^4}$ & $1.31\!\times\!10^6$ & $5.01\!\times10^5$ & $8.34\!\times10^5$ & $\mathbf{4.94\!\times10^5}$ &   $1.96\!\times10^6$
  \\
  $\text{CR-optimal}$ &
 $1.15\!\times\!10^0$ & $2.79\!\times\!10^2$  & $1.89\!\times\!10^2$  & $\mathbf{3.13\!\times\!10^0}$  & $4.50\!\times\!10^3$  & $5.10\!\times\!10^5$ & $4.37\!\times\!10^5$  & $\mathbf{7.74\!\times\!10^0}$ & $5.66\!\times\!10^4$ & $4.26\!\times\!10^6$ & $3.89\!\times\!10^6$ & $\mathbf{1.18\!\times\!10^1}$ \\

 \hline \hline &   \multicolumn{4}{|c|}{\textbf{REA1\cite{leibfritz2003description}} } &
  \multicolumn{4}{|c|}{\textbf{WEC1\cite{leibfritz2003description}}} &  \multicolumn{4}{|c|}{\textbf{AGS\cite{leibfritz2003description}}} \\ \hline
                  & $\mathbf{\|T_K\|_F^2}$ & $\mathbf{\|T_K\|^2}$ & \textbf{Regret} & \textbf{Comp-Ratio}& $\mathbf{\|T_K\|_F^2}$ & $\mathbf{\|T_K\|^2}$ & \textbf{Regret} & \textbf{Comp-Ratio} & $\mathbf{\|T_K\|_F^2}$ & $\mathbf{\|T_K\|^2}$ & \textbf{Regret} & \textbf{Comp-Ratio}\\
                  \hline
      Noncausal &
  $5.18\!\times\!10^1$ &  $2.05\!\times\!10^3$ & 0 & 1 & $4.23\!\times\!10^3$  & $4.01\! \times\!10^5$ & 0& 1& $4.31\times10^3$ & $6.68\times10^5$ &0& 1\\

  $\mathcal H_2$ &
  $\mathbf{2.62\!\times\!10^2}$   & $1.46\!\times\!10^4$ & $1.26\!\times\!10^4$  & $1.19\!\times\!10^1$ &  $\mathbf{1.04\!\times\!10^4}$ & $9.59\!\times\!10^5$ & $5.65\!\times\!10^5$ & $3.52\!\times\!10^2$ & $\mathbf{4.53\!\times\!10^3}$ & $6.69\!\times\!10^5$ & $7.79\!\times\!10^4$ & $3.09\!\times\!10^0$ \\

 $\mathcal H_\infty$ &
 $4.40\!\times\!10^3$  & $\mathbf{4.36\!\times\!10^3}$  & $4.36\!\times\!10^3$  & $1.78\!\times\!10^4$  & $4.20\!\times\!10^5$ & $\mathbf{4.19\!\times\!10^5}$ & $4.19\!\times\!10^5$ & $1.61\!\times\!10^6$ & $4.54\!\times\!10^3$ & $\mathbf{6.68\!\times\!10^5}$ & $5.17\!\times\!10^4$ & $3.16\!\times\!10^0$  \\

  Regret-optimal &
  $3.38\!\times\!10^3$ & $5.30\!\times\!10^3$ & $\mathbf{3.32\!\times\!10^3}$ & $1.35\!\times\!10^4$ &                $1.59 \!\times\! 10^5$  & $5.53\!\times\!10^5$ & $\mathbf{1.56\!\times\!10^5}$  & $5.70\!\times\!10^5$  & $2.49\!\times10^4$ & $6.69\!\times10^5$ & $\mathbf{2.06\!\times10^4}$  & $8.20\!\times\!10^4$
  \\

  $\text{CR-optimal}$ &
 $2.93\!\times\!10^2$ & $1.76\!\times\!10^4$ & $1.56\!\times\!10^4$ & $\mathbf{9.11\!\times\!10^0}$ & $2.93\!\times\!10^4$  & $2.51\!\times\!10^6$  & $2.11\!\times\!10^6$  & $\mathbf{3.82\!\times\!10^1}$  & $4.61\!\times\!10^3$ & $6.69\!\times\!10^5$ & $2.04\!\times\!10^5$ & $\mathbf{2.86\!\times\!10^0}$ \\ \hline
\end{tabular}}}
\end{table*}

\section{Numerical Examples} \label{sec:numerical}
In this section, we present the performance of the optimal competitive-ratio, regret,  $\mathcal H_2$, and $\mathcal H_\infty$ controllers for different systems in different disturbance regimes. First, we demonstrate the frequency domain evaluations to compare the performance of the controllers. We then study their time-domain behavior with different disturbance sequences.

\subsection{Frequency-domain}\label{subsec:ex_freq}

Recall the transfer (cost) operator $T_{\mathcal K}$ given in \eqref{transfer_operator}, which maps the disturbance sequence $\vw$ to the sequences $\vs$ and $\vu$. This operator governs the performance of any linear controller. In particular, the controllers discussed in this work aim to control different metrics of $T_{\mathcal K}$ across all range of disturbances. To this end, consider the transfer function representation of this operator in the $z$-domain:
\begin{align}\label{eq:ex_TKz}
T_{K}(z) &= \left[\begin{array}{c} F(z) K(z)+G(z) \\ K(z) \end{array} \right].
\end{align}

$\mathcal H_2$ controller minimizes the Frobenius norm of $T_{\mathcal K}(z)$, i.e.,
\begin{align}\label{eq:ex_TkFrob}
\|T_{\mathcal K}\|_F^2 &= \frac{1}{2\pi}\int_0^{2\pi}\mbox{trace}\left(T_{K}^*(e^{j\omega})T_{ K}(e^{j\omega})\right)d\omega,
\end{align}
and $\mathcal H_\infty$ controller minimizes the operator norm of $T_{\mathcal K}(z)$, i.e.,\looseness=-1
\begin{align}\label{eq:ex_Tknorm}
\|T_{\mathcal K}\|^2 &= \max_{0\leq\omega\leq 2\pi} \sigma_{\max}\left(T_{ K}^*(e^{j\omega})T_{K}(e^{j\omega})\right).
\end{align}
Similarly, the controllers that are constructed with the competitive design aim to minimize certain metrics of $T_{\mathcal K}(z)$ with respect to the transfer function of the non-causal controller $T_{\mathcal K_0}$. In particular, the regret-optimal controller minimizes
\begin{align}\label{eq:ex_regret}
&\left\|T_K^*T_K-T_{K_0}^*T_{K_0}\right\| \\
&= \max_{0\leq\omega\leq 2\pi}\sigma_{\max}\left(T_K^\ast (e^{j\omega})T_K(e^{j\omega})-T_{K_0}^\ast(e^{j\omega})T_{K_0}(e^{j\omega})\right), \nn
\end{align}
whereas the optimal competitive-ratio controller minimizes
\begin{equation}\label{eq:ex_comp}
    \max_{0\leq\omega\leq 2\pi}\! \!\sigma_{\max}\left(M^{-\ast}(e^{j\omega}) T_K^\ast (e^{j\omega})T_K(e^{j\omega}) M^{-1}(e^{j\omega})\right).
\end{equation}

To illustrate the performance of different controllers across the full range of input disturbances, we plot the metrics given in \eqref{eq:ex_TkFrob}-\eqref{eq:ex_comp} as a function of frequency for a randomly generated LTI system. For $n=4$ and $m=2$, we randomly generate all the system matrices, \textit{i.e.}, $A, B_u, B_w, Q, R$, such that $A$ is unstable but the pair $(A, B_u)$ is stabilizable. We construct
the optimal non-causal, $\mathcal H_2$, $\mathcal H_\infty$, regret, and competitive-ratio controllers and compute the transfer operators $T_K(e^{j\omega})$ for each of them. Figure \ref{fig:random} presents the performance of these controllers. \looseness=-1

\begin{figure}[t]
    \centering
    \includegraphics[scale=0.4]{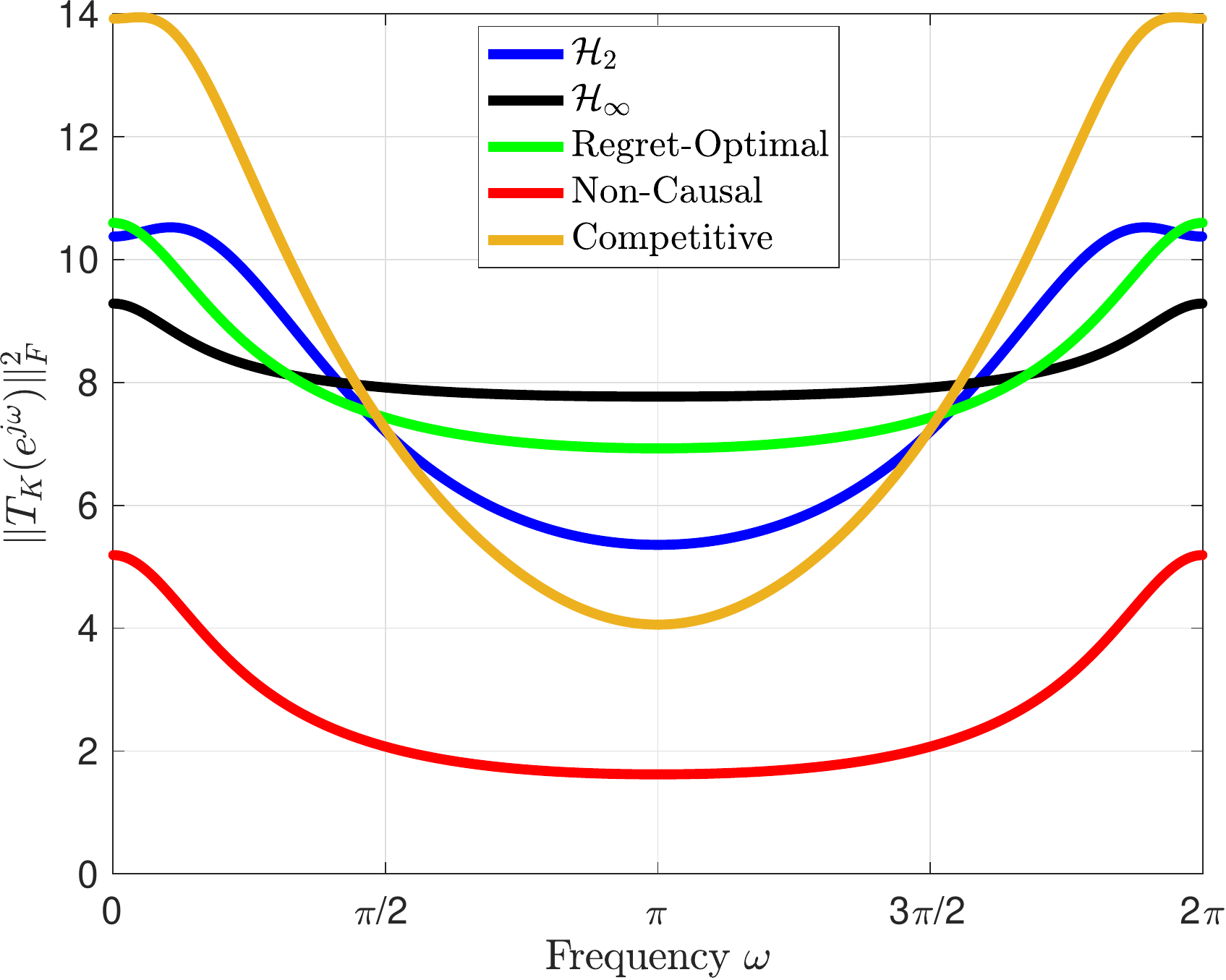}
    \includegraphics[scale=0.4]{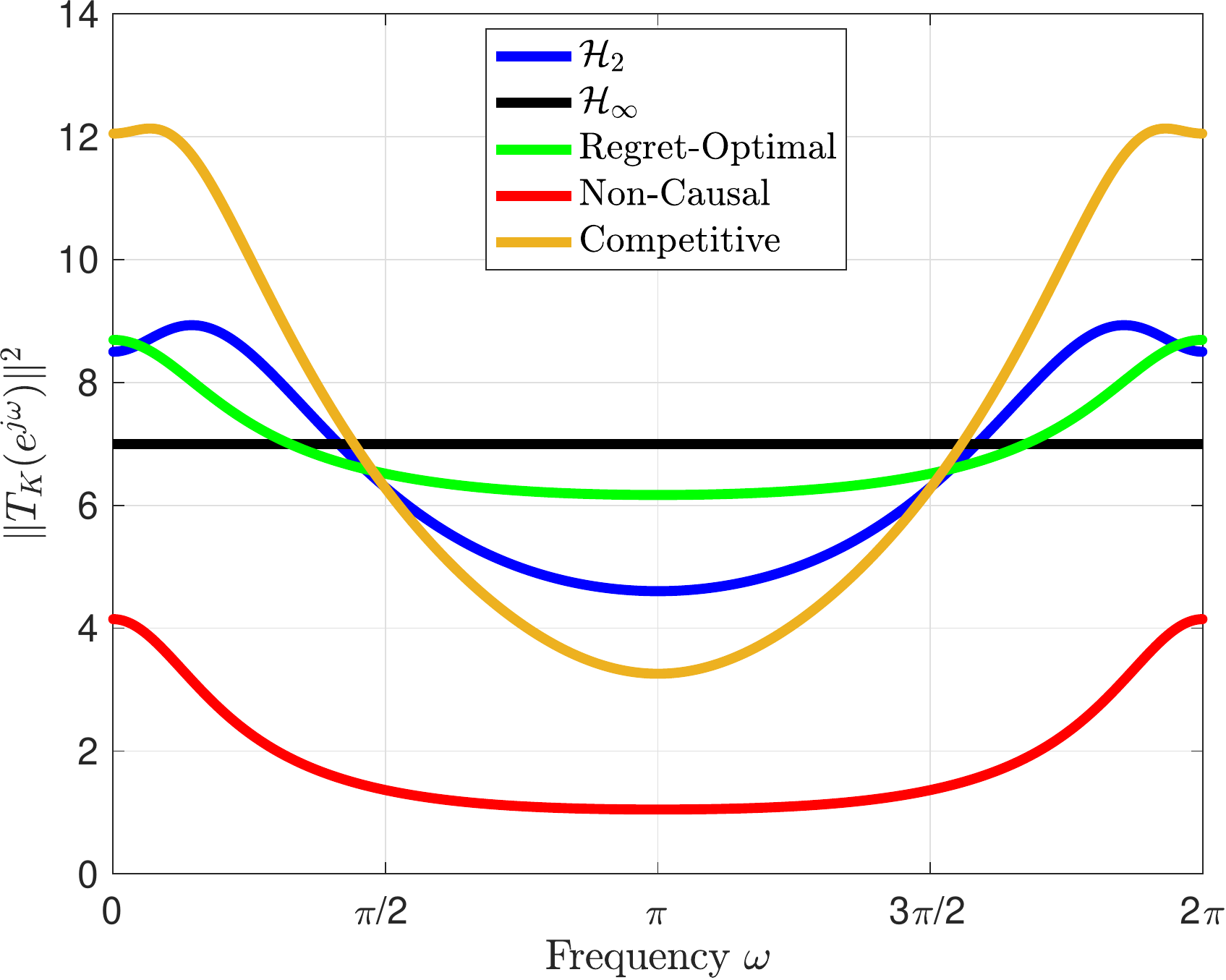}
    \includegraphics[scale=0.4]{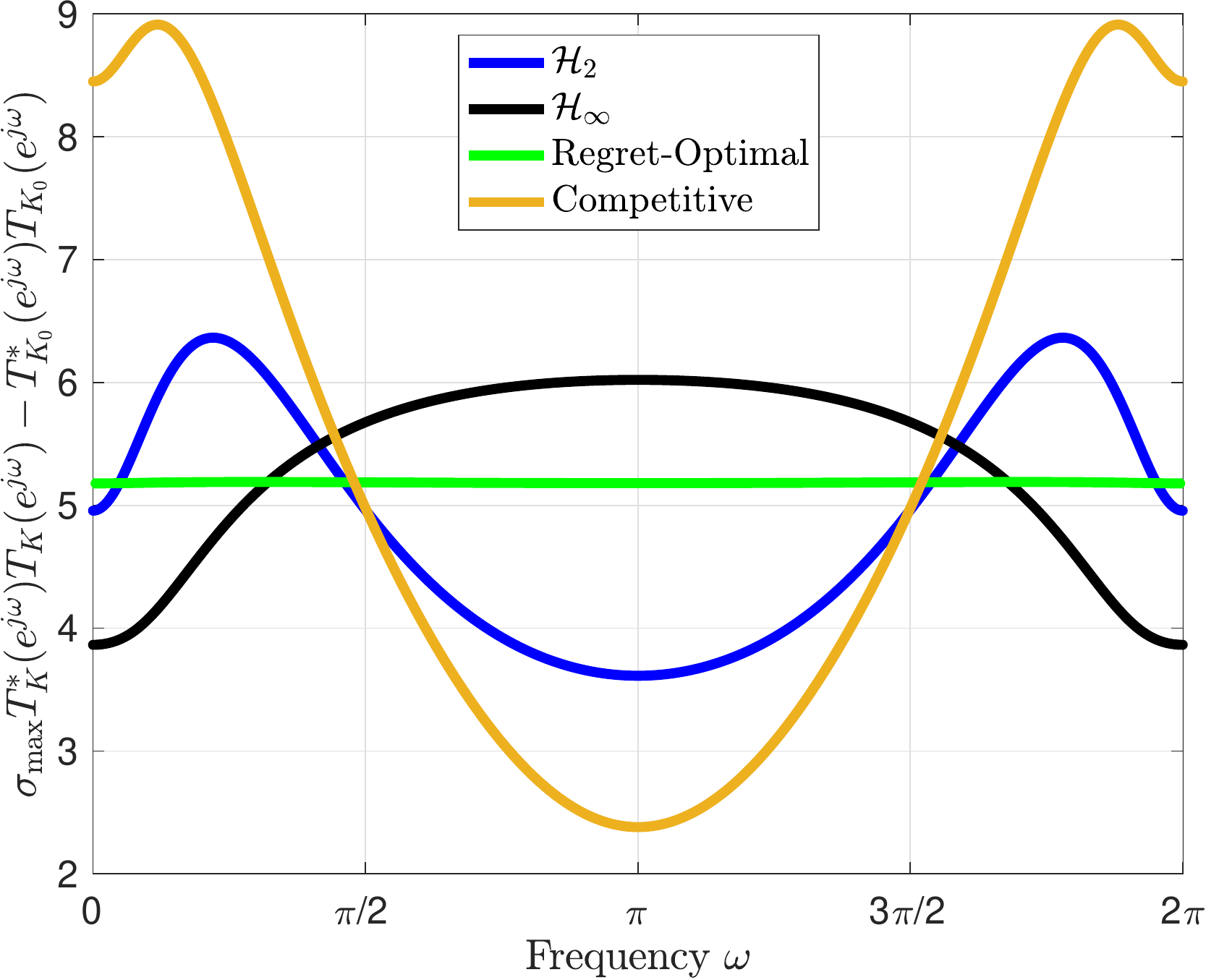}
    \includegraphics[scale=0.4]{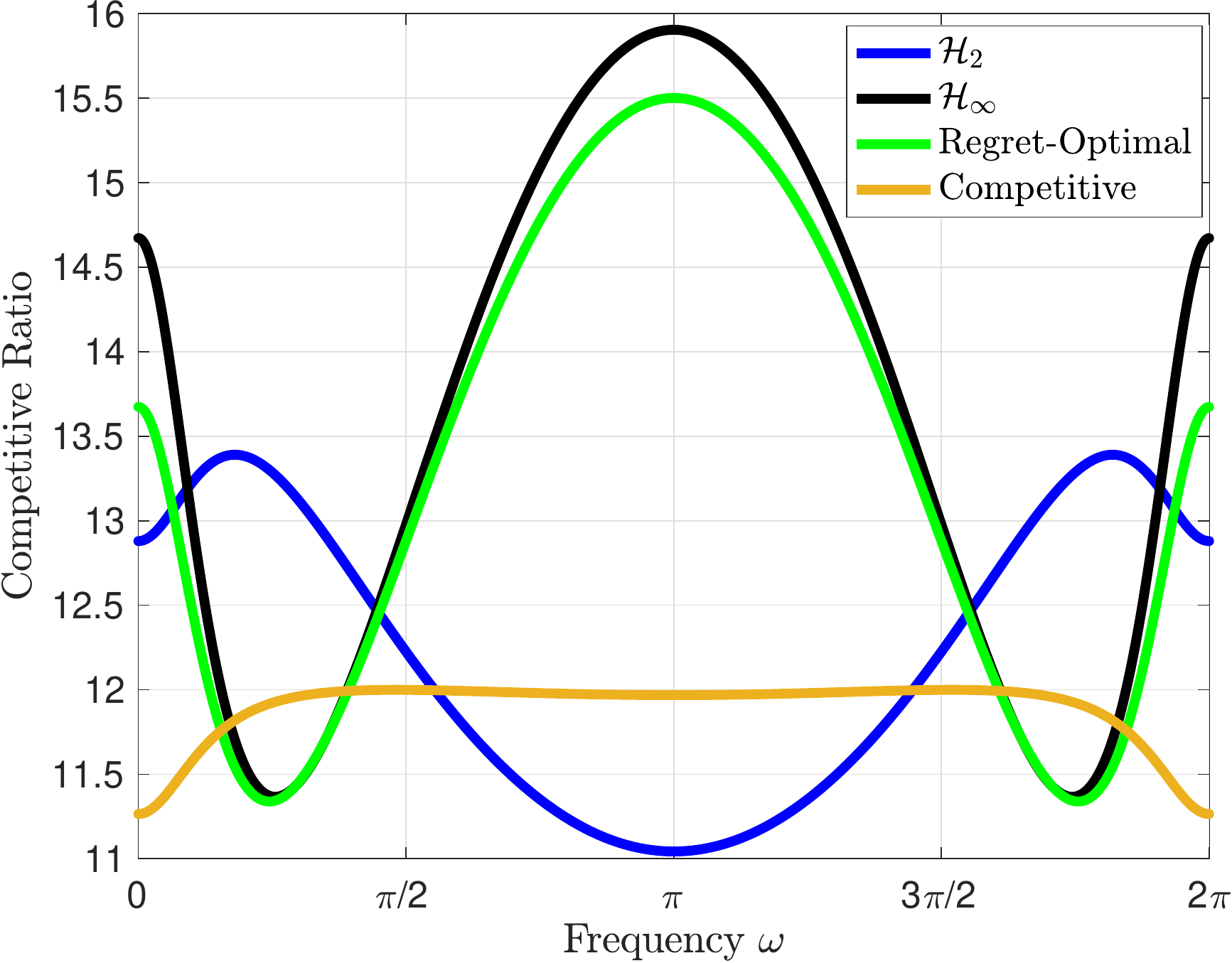}
    \caption{
    The different metrics in \eqref{eq:ex_TkFrob} - \eqref{eq:ex_comp} as a function of the frequency for the randomly generated linear dynamical system with six-dimensional state vector and two-dimensional control vector. The squared operator norm, squared Frobenius norm, regret, and competitive ratio for each controller are illustrated in the figures, respectively.} \
    \label{fig:random}
\end{figure}

As expected the non-causal controller outperforms
the four causal controllers in terms of Frobenius and operator norms across all frequencies as shown in the top two plots of Fig. \ref{fig:random}. The first figure demonstrates the per-frequency Frobenius norm of the cost operator for the different controllers. The $\mathcal H_2$ controller attains the minimal Frobenius norm, \textit{i.e.,} the area under the curve, as it is targeted to minimize \eqref{eq:ex_TkFrob}. The optimal competitive-ratio controller achieves a similar performance to the optimal $\mathcal H_2$ controller, and outperforms the $\mathcal H_\infty$ and the regret-optimal controllers. However, in doing so, it sacrifices the worst-case performance and so has a relatively large values for at the low frequencies as shown in the top right plot of Fig. \ref{fig:random}. As designed to minimize \eqref{eq:ex_Tknorm}, the $\mathcal H_\infty$ controller attains the smallest peak of the per-frequency operator norm. Notice that since the competitive-ratio controller is designed to minimize the ratio with respect to the non-causal controller, it follows the non-causal controller closely where the non-causal controller performs well, and sacrifices the performance where the non-causal controller does not achieve good performance. Conversely, the regret-optimal controller follows the performance of the non-causal controller uniformly (almost-constant distance from the non-causal controller) across all frequencies by minimizing its largest deviation from the latter. This demonstrates the different behavior of these controllers due to their design strategies.


The bottom two plots demonstrate the regret and competitive-ratio performances of the causal controllers per frequency. The peaks for each controller in these plots correspond to the regret and the competitive-ratio of each controller respectively, \textit{i.e.}, the values of \eqref{eq:ex_regret} and \eqref{eq:ex_comp} for the transfer functions of respective controllers. Consistent with our theoretical claims, the competitive controllers obtain the best performance in their corresponding metric. In particular, while the regret-optimal controller attains a uniform additive deviation from the non-causal controller, the optimal competitive-ratio controller attains a uniform multiplicative deviation from this universal benchmark. This shows that the competitive control design approaches produce distinctly different controller characteristics depending on the design metric.\looseness=-1

While the previous example was for illustration purposes, we proceed to show that the described behavior of the controllers is maintained in practical systems as well. In particular, we study six linear time-invariant models~\cite{leibfritz2003description}: two helicopter models (HE1 and HE4), an aircraft model (AC12), a chemical reactor model (REA1), a wind energy conversion system (WEC1), and an automobile gas turbine model (AGS). These models cover a wide range of applications and have various state and control input dimensions ranging from four-state and three-input dimensions of (AC12), up to eight-state and four-input dimensions of multi-purpose helicopter dynamics (HE4). For further details of these systems please refer to \cite{leibfritz2003description}. The norms evaluation for these systems is given in Table \ref{table_all}. It can be observed that the observations from Fig \ref{fig:random} hold for these practical control systems.

\subsection{Time-domain evaluation}\label{subsec:ex_time}

In this section, we examine the time-domain performance of these controllers across wide range of disturbances. To this end, we consider the control task of stabilization (hovering) of a twin-engined multi-purpose helicopter model (HE4) with linearized dynamics using state-feedback \cite{leibfritz2003description}. The task is noted as \textit{full-authority control}, where the controller has total control over the blade angles of the main and tail rotors, Section 12.2.2 of~\cite{skogestad2007multivariable}. The control system has eight dimensional state: Pitch attitude, roll attitude, roll rate, pitch rate, yaw rate, and three dimensional velocities; while the control has four dimensions which aim to affect the lift, longitudinal and lateral motion, as well as avoid spinning. For this dynamical system, we construct the optimal competitive-ratio, regret, $\mathcal H_2$, and $\mathcal H_\infty$ controllers. In all experiments, we run 30 independent trials and present the mean average control cost over time.

Before presenting time-domain results, we consider the logarithm of the operator norm of $T_\mathcal{K}$, \eqref{eq:ex_Tknorm} for all controllers in Figure~\ref{fig:HE4_freq}. This figure depicts the performance of each controller with respect to all possible frequency of sinusoidal disturbances. Notice that the sinusoidal disturbances around zero frequency result drastically varying behavior for each controller (the y-axis is in logarithmic scale) which we investigate further in our time-domain evaluations with these specific frequencies. Before discussing the effect of sinusoidal disturbances, we consider the cost attained by these controllers with standard Gaussian noise, $w_t \sim \mathcal{N}(0,I)$, in Figure \ref{fig:time_gauss}. As expected, the $\mathcal H_2$ controller outperforms the other causal controllers. The optimal competitive-ratio controller has a comparable performance and significantly outperforms other controllers. Notice that the average costs of each controller match the Frobenius norm of their respective transfer operators presented in Table \ref{table_all}, further verifying our theoretical claims.

\begin{figure}
     \centering
     \begin{subfigure}[b]{0.45\textwidth}
         \centering
         \includegraphics[width=\textwidth]{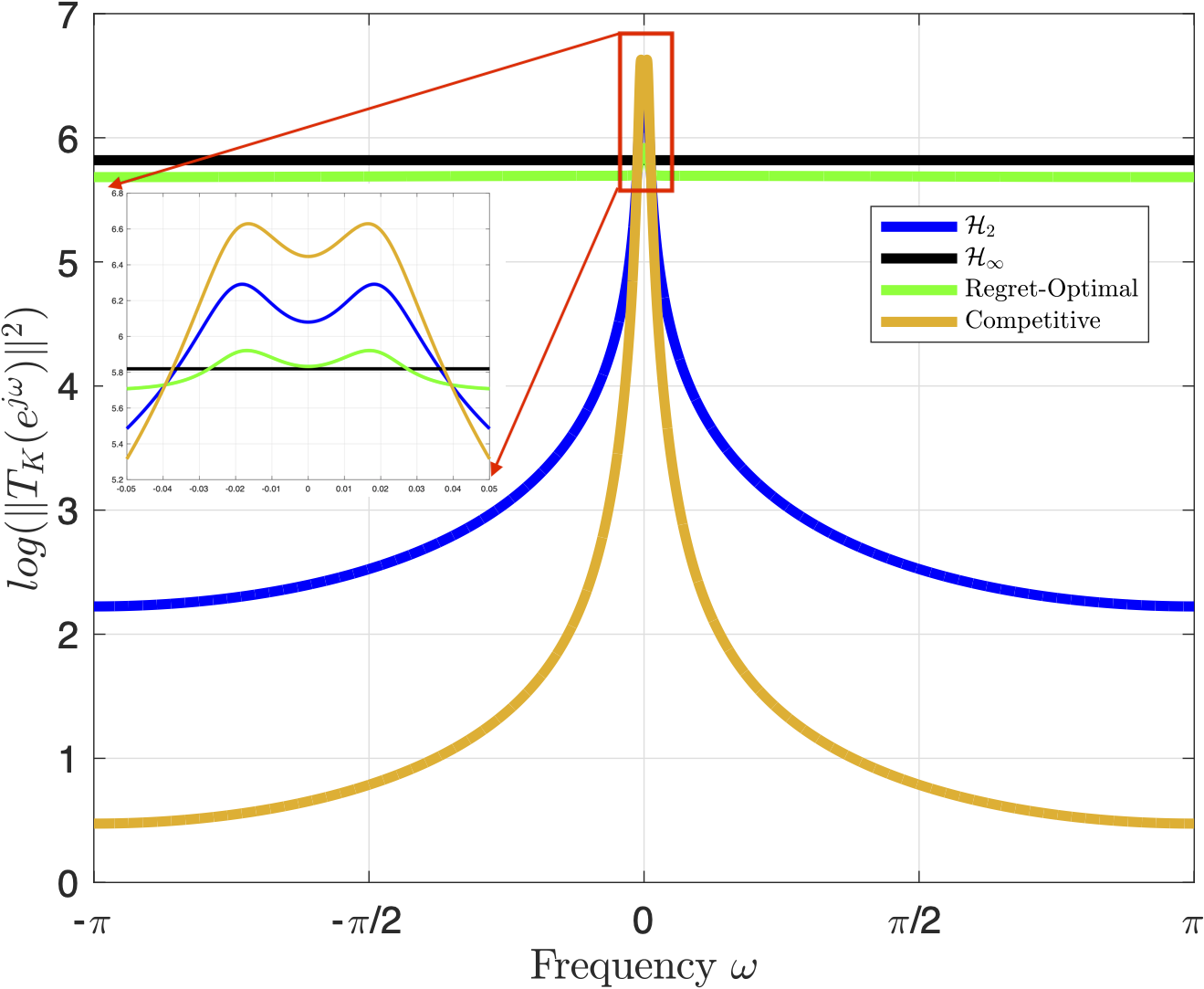}
         \caption{The operator norm for HE4}
         \label{fig:HE4_freq}
     \end{subfigure}
     \begin{subfigure}[b]{0.45\textwidth}
         \centering
         \includegraphics[width=\textwidth]{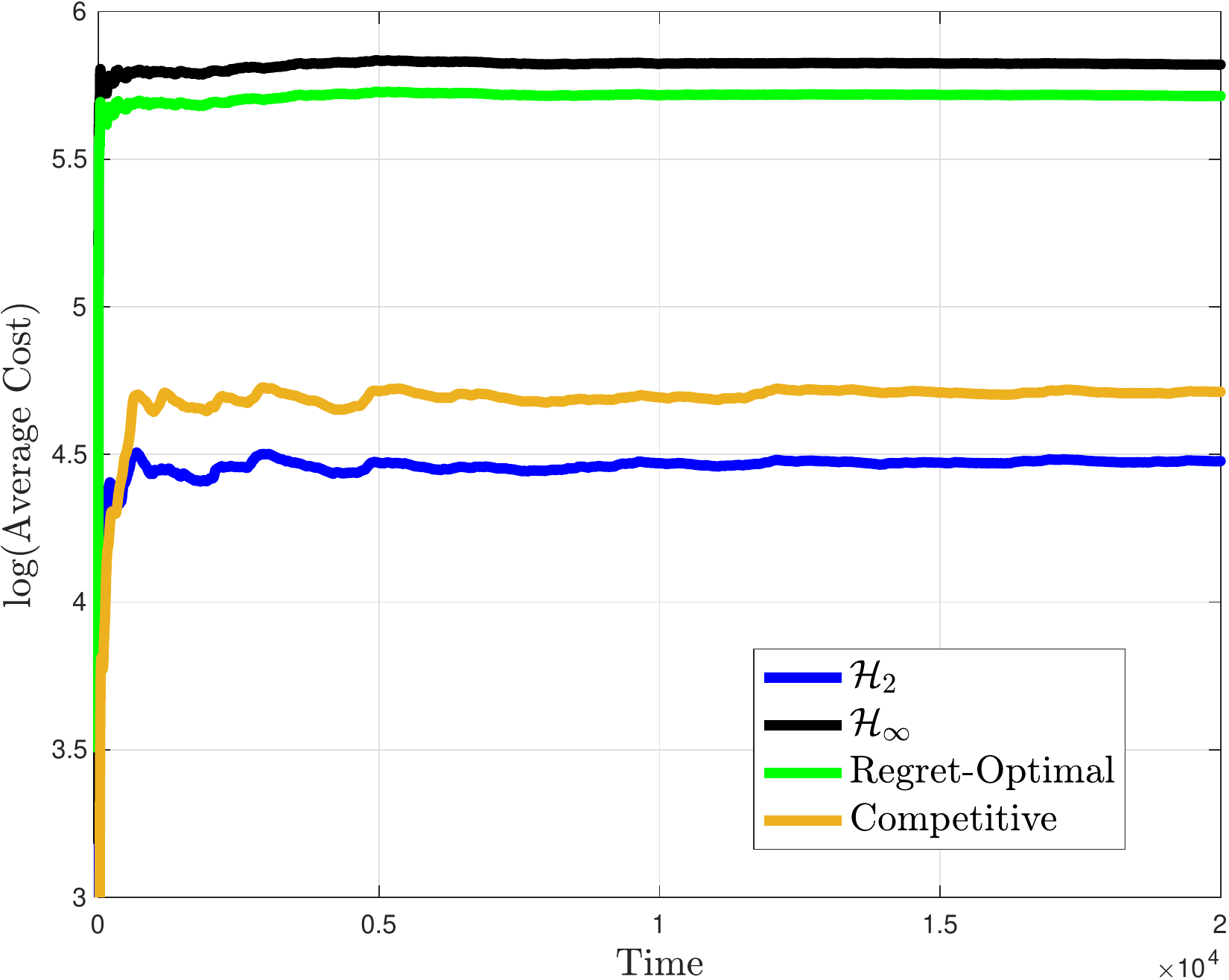}
         \caption{$w_t \sim \mathcal{N}(0,I)$}
         \label{fig:time_gauss}
     \end{subfigure}
     \begin{subfigure}[b]{0.45\textwidth}
         \centering
         \includegraphics[width=\textwidth]{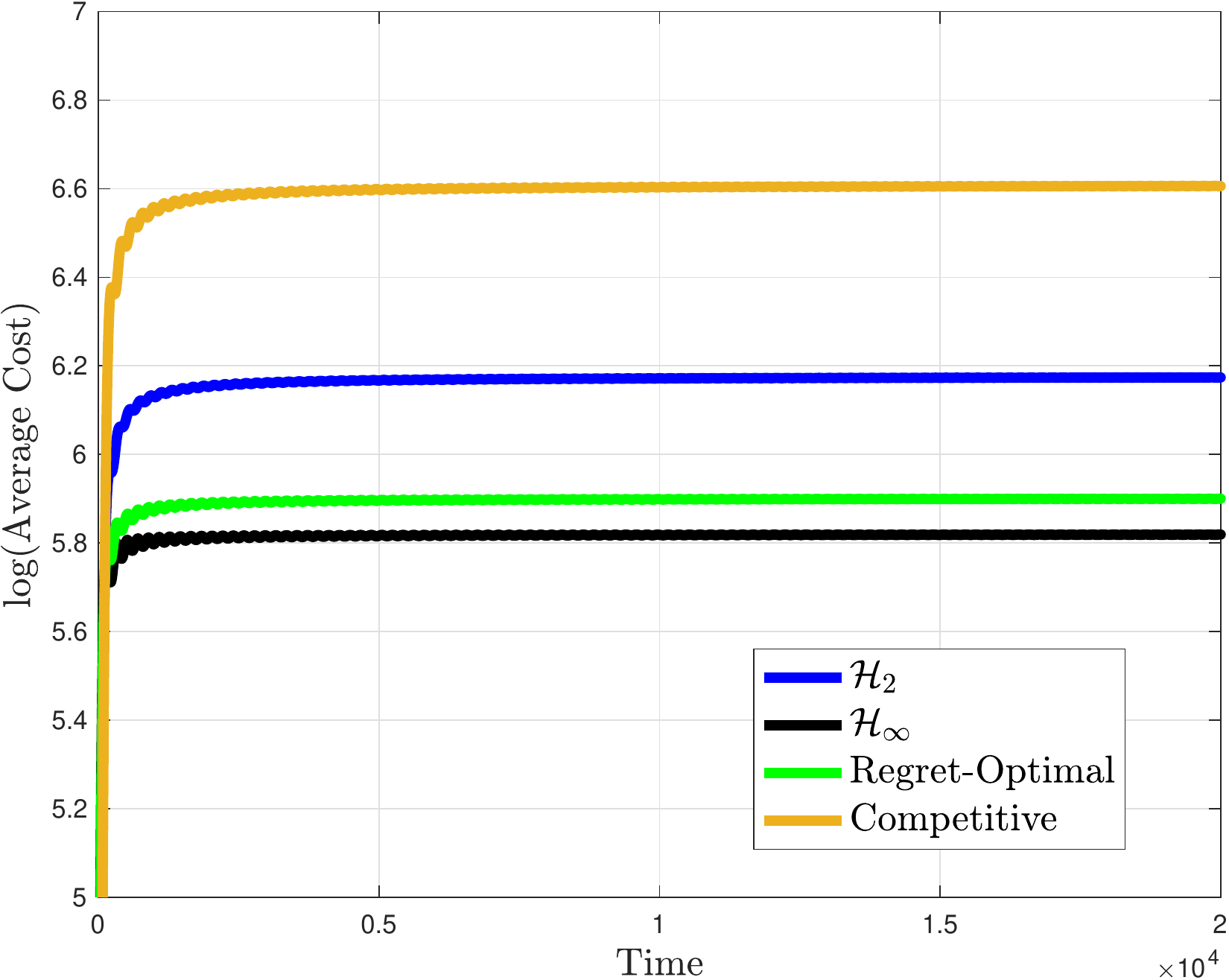}
         \caption{$w_t = \sin(0.016t)$}
         \label{fig:time_sine_low}
     \end{subfigure}
     \begin{subfigure}[b]{0.45\textwidth}
         \centering
         \includegraphics[width=\textwidth]{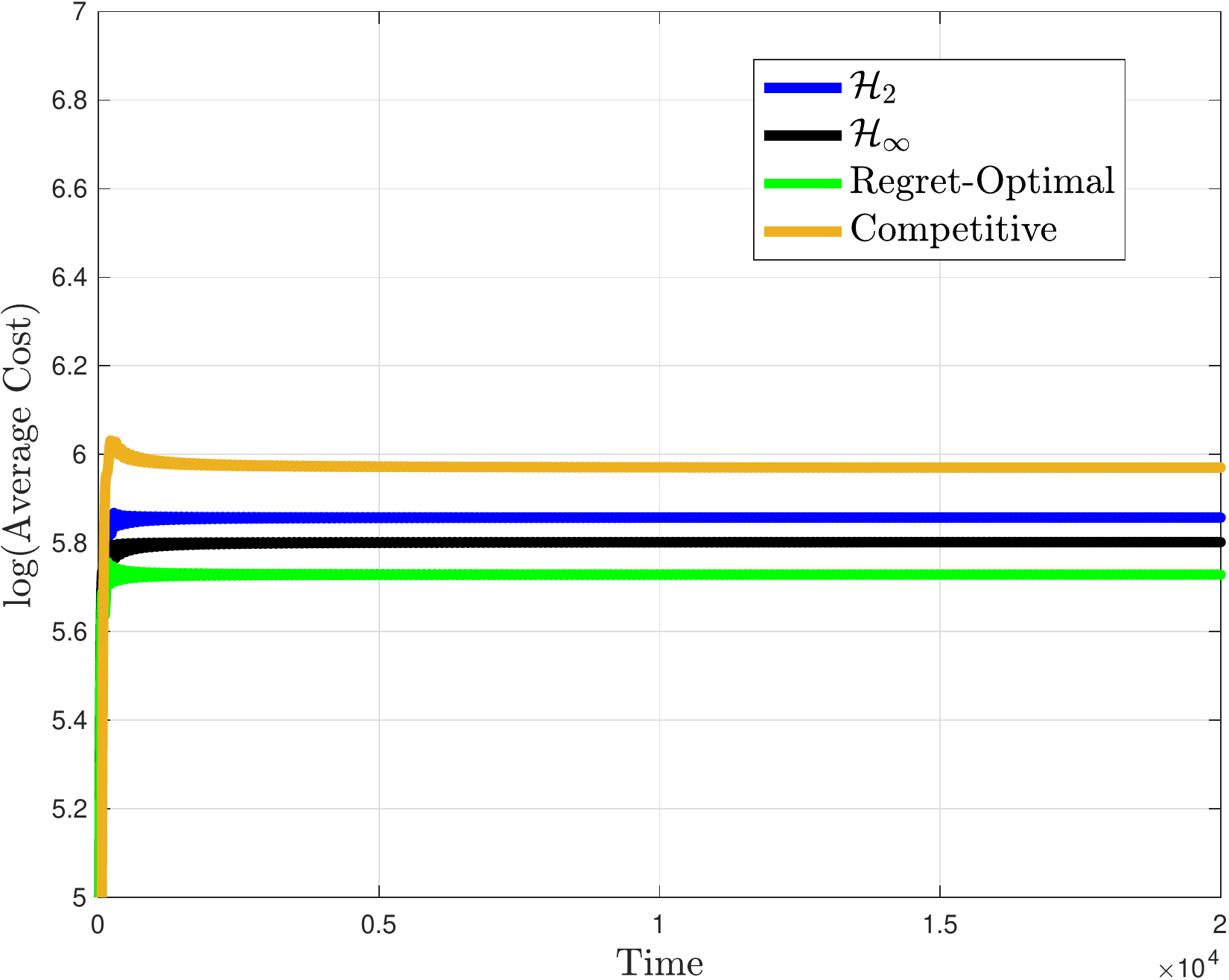}
         \caption{$w_t = \sin(0.034t)$}
         \label{fig:time_sine_med}
     \end{subfigure}
     \begin{subfigure}[b]{0.45\textwidth}
         \centering
         \includegraphics[width=\textwidth]{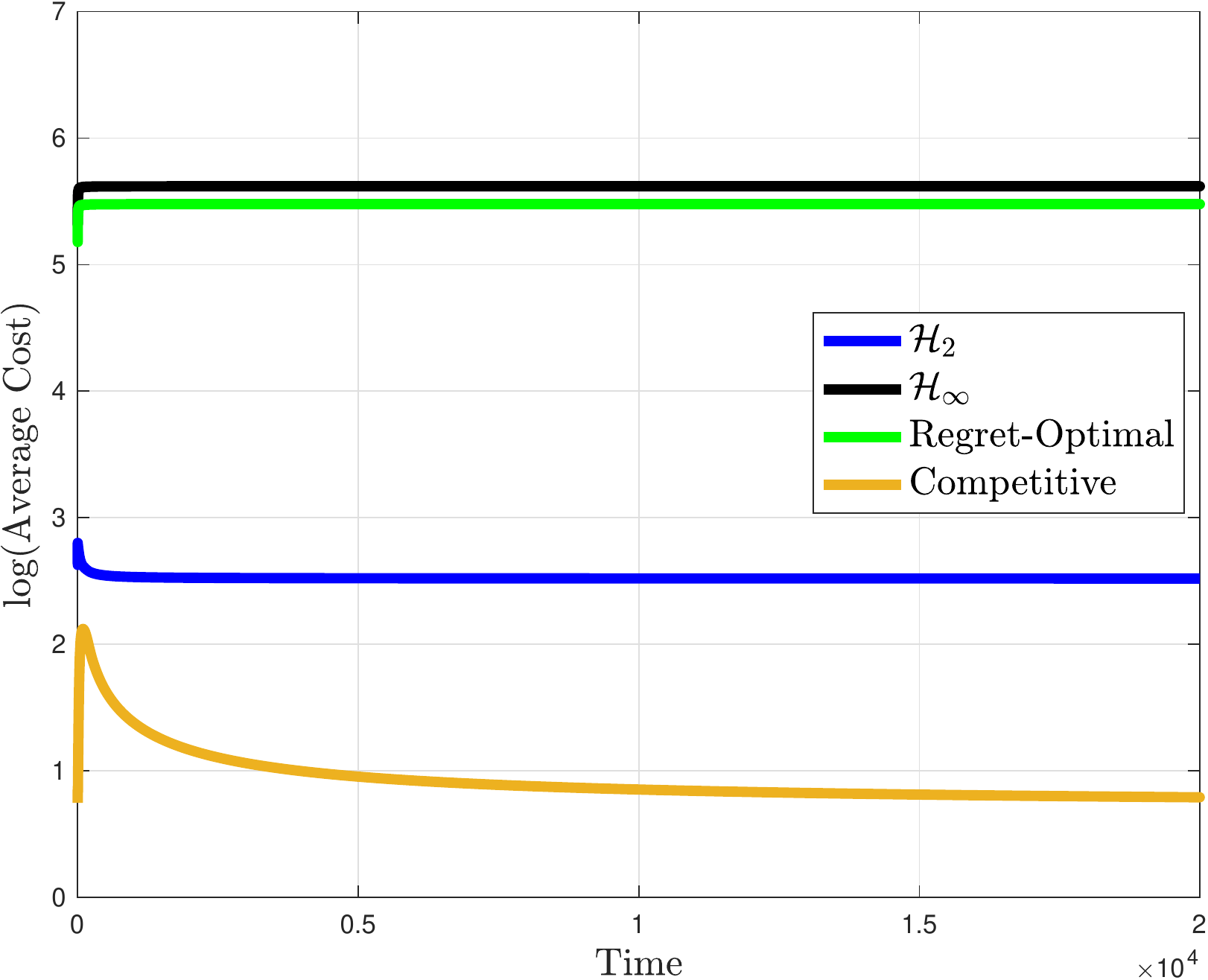}
         \caption{$w_t = \sin(\frac{\pi}{2}t)$}
         \label{fig:time_sine_high}
     \end{subfigure}
      \begin{subfigure}[b]{0.45\textwidth}
         \centering
         \includegraphics[width=\textwidth]{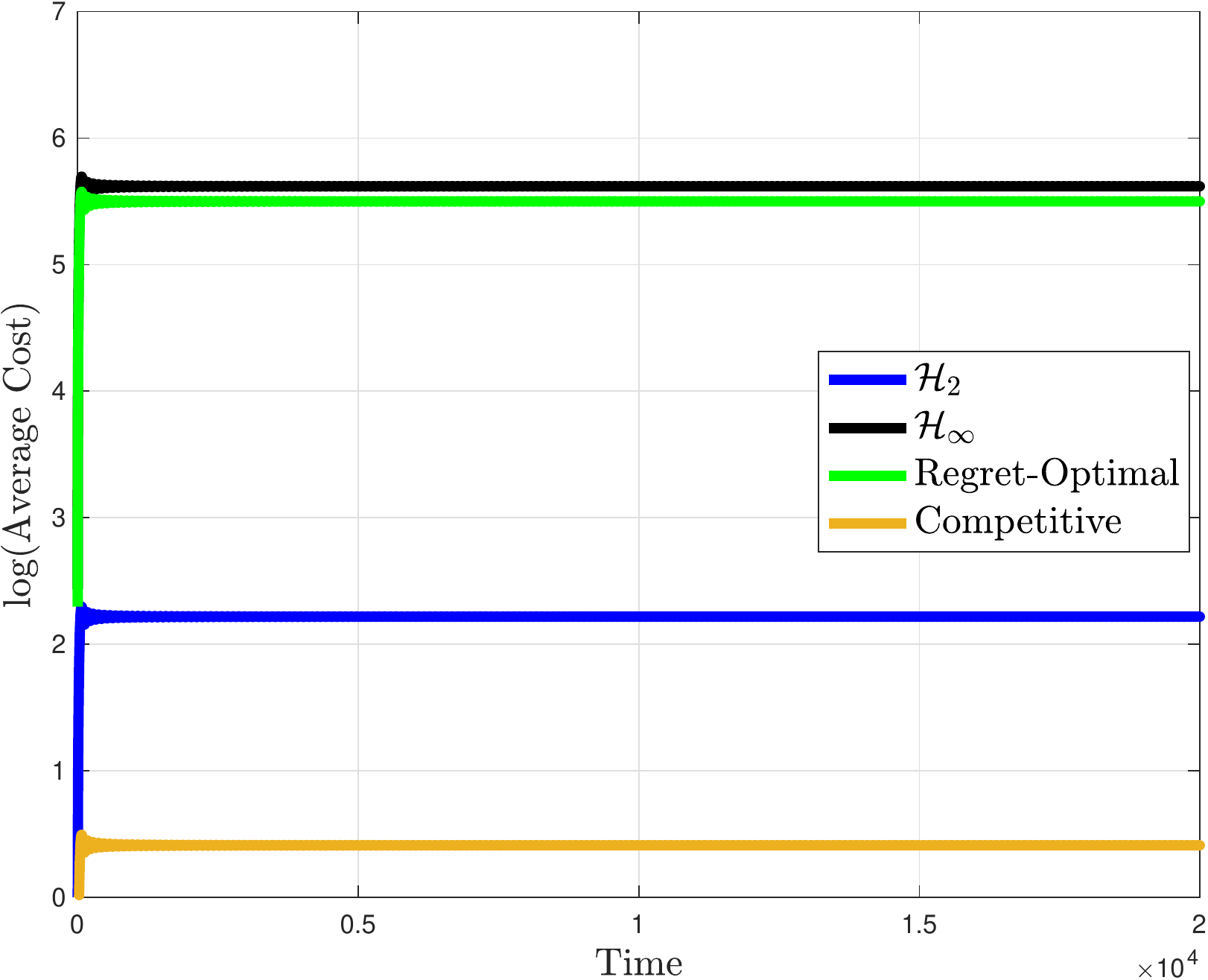}
         \caption{$w_t = \sin(0.99\pi t)$}
         \label{fig:time_sine_highest}
     \end{subfigure}
        \caption{Time Domain Performance of All Controllers Under Different Noise Disturbances}
        \label{fig:three graphs}
\end{figure}

Next, we evaluate the dynamical system under various sinusoidal disturbances. When we set $w_t = \sin(0.016t)$, \textit{i.e.}, the disturbance frequency that yields the worst performance for the optimal competitive-ratio controller (Figure \ref{fig:HE4_freq}), we observe that the $\mathcal{H}_\infty$ outperforms other controllers and the optimal competitive-ratio controller attains the largest average cost as expected in Figure \ref{fig:time_sine_low}. From Figure \ref{fig:HE4_freq}, we observe that for disturbances with frequency within $[0.03, 0.04]$, the regret-optimal controller achieves the lowest operator norm among all causal controllers, followed by the $\mathcal{H}_\infty$ controller. This phenomenon is depicted in the time-domain evaluation  of these controllers under the disturbance of $w_t = \sin(0.034t)$ in Figure \ref{fig:time_sine_med}. Furthermore, Figure \ref{fig:HE4_freq} shows that as the frequency of disturbances increases, the optimal competitive-ratio controller starts to outperform other controllers significantly. This behavior is again observed in time-domain evaluations with $w_t = \sin(\frac{\pi}{2}t)$ and $w_t = \sin = \sin(0.99\pi t)$ in Figures \ref{fig:time_sine_high} and \ref{fig:time_sine_highest} respectively. Finally, we would like to highlight that the mean average costs over time for all controllers exactly match the frequency domain evaluations for the operator norms of the corresponding transfer functions of the controllers. Overall, our time-domain evaluations show that the competitive design strategies attain significantly improved performances in certain disturbances on a large scale practical control system. Thus, we believe that these control design strategies are viable alternatives to the classical control design paradigms for improved performance. \looseness=-1

\section{Derivation of the state-space solutions}\label{sec:state-space}
This section provides the proof of our main results in Theorems \ref{th:SC} to \ref{th:scalar} for the state-space setting. We start by proving Theorem \ref{th:SC}, in which our main objective is to explicitly compute the optimal controller in \eqref{eq:op_optcon}
\begin{align}\label{eq:cont_freq}
    K(z)&= \Delta^{-1}(z)({K}'(z)+ {C}(z))M(z),
\end{align}
where all transfer functions follows from operational counterparts in \eqref{eq:op_optcon}, and $C(z)$ denotes the strictly causal part of $\Delta(z) K_0(z) M^{-1}(z)$. We present the lemmas that are required to compute each function in \eqref{eq:cont_freq} and then prove Theorem \ref{th:SC}. Proofs of the lemmas appear in the Appendix.

Recall that the transfer functions of the operators $\mathcal F$ and $\mathcal G$ in \eqref{eq:operator_sys} are given by
\begin{align}\label{eq:FG}
    F(z) &= Q^{1/2}\caus{A}B_u\nn\\
    G(z) &= Q^{1/2}\caus{A}B_w.
\end{align}
with $Q = Q^{1/2}Q^{1/2}$. That is, $F(z)$ and $G(z)$ map the control and the disturbance, respectively, to the state vector (weighted with $Q^{1/2}$).

The following lemma concerns with the canonical factorization that also appears in LQR ($\mathcal H_2$ control) and regret-optimal control. \looseness=-1
\begin{lemma}\label{lemma:Delta}
Assume that $(A,B_u)$ is stabilizable and $Q\succ0$. The transfer function $I+ F^\ast(z^{-\ast})F(z)$ can be factored as $\Delta^*(z^{-*})\Delta(z)$, where
\begin{align}\label{eq:delta_def}
  &\Delta(z) = (I + B_u^{*}PB_u)^{1/2} (I + K_{\emph{lqr}} \caus{A}B_u),
\end{align}
$P$ is the unique stabilizing solution to the Ricatti equation
\begin{align}
P&= A^\ast PA + Q - A^\ast PB_u(I + B_u^\ast P B_u)^{-1}B_u^\ast PA = 0,\nn
\end{align}
and $K_{\emph{lqr}} = (I + B_u^\ast P B_u)^{-1}B_u^\ast P A$. Furthermore, $\Delta^{-1}(z)$ is casual and bounded on the unit circle.
\end{lemma}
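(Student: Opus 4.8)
The plan is to establish the spectral factorization $I + F^*(z^{-*})F(z) = \Delta^*(z^{-*})\Delta(z)$ by direct verification, using the stabilizing Riccati solution $P$ to build the factor $\Delta$. First I would recall that this is precisely the discrete-time spectral factorization underlying the LQR problem: the positive-definite transfer function $I + F^*F$ on the unit circle admits a factorization into a causal factor $\Delta$ whose inverse is also causal and bounded, and the standard way to exhibit $\Delta$ is through the return-difference identity from optimal control. The claimed form $\Delta(z) = (I + B_u^*PB_u)^{1/2}(I + K_{\text{lqr}}(zI-A)^{-1}B_u)$ is the canonical minimum-phase factor, so the work is to verify that this candidate actually reproduces $I + F^*F$ and that it has the stated causality and boundedness properties.

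The key algebraic step is to compute $\Delta^*(z^{-*})\Delta(z)$ and match it against $I + F^*(z^{-*})F(z)$. Writing $F(z) = Q^{1/2}(zI-A)^{-1}B_u$, I would substitute the candidate $\Delta$ and expand the product. The crucial identity is the so-called discrete-time return-difference equality, which in this setting states that
\begin{align}
(I + K_{\text{lqr}}(z^{-1}I - A^*)^{-1})^{*} (I + B_u^*PB_u)(I + K_{\text{lqr}}(zI-A)^{-1}B_u) = I + B_u^*(z^{-1}I-A^*)^{-1}Q(zI-A)^{-1}B_u.
\end{align}
Verifying this amounts to plugging in the definition $K_{\text{lqr}} = (I + B_u^*PB_u)^{-1}B_u^*PA$ and using the Riccati equation $P = A^*PA + Q - A^*PB_u(I+B_u^*PB_u)^{-1}B_u^*PA$ to cancel the cross terms. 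The standard trick is to insert $zI - A$ and $z^{-1}I - A^*$ resolvent identities so that the Riccati relation appears naturally; the $Q$ term on the right emerges exactly from the $A^*PA + Q - (\cdots)$ structure of the Riccati equation, and the remaining frequency-dependent pieces telescope.

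For the causality and boundedness of $\Delta^{-1}(z)$, I would argue that by the matrix inversion lemma the inverse is $\Delta^{-1}(z) = (I - K_{\text{lqr}}(zI - (A - B_u K_{\text{lqr}}))^{-1}B_u)(I + B_u^*PB_u)^{-1/2}$, whose poles are the eigenvalues of the closed-loop matrix $A_K = A - B_u K_{\text{lqr}}$. Since $P$ is the \emph{stabilizing} solution to the Riccati equation, $A_K$ is stable (all eigenvalues strictly inside the unit disk), so $\Delta^{-1}$ is analytic outside and on the unit circle, hence causal and bounded there.

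The main obstacle I expect is the bookkeeping in the return-difference identity: the cross terms in the expansion of $\Delta^*(z^{-*})\Delta(z)$ mix resolvents evaluated at $z$ and at $z^{-1}$, and collapsing them into the clean $I + F^*F$ form requires carefully applying the Riccati equation in the right places rather than any single slick substitution. The stabilizability of $(A,B_u)$ and $Q \succ 0$ are what guarantee existence and uniqueness of the stabilizing $P$ (and hence that $A_K$ is stable), so those hypotheses feed directly into the boundedness claim rather than the factorization computation itself.
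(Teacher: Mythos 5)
Your proposal is correct, and it fills in a proof that the paper deliberately omits: the authors dismiss Lemma~\ref{lemma:Delta} as standard, citing \cite[Lemma 2]{sabag2021regret}, and only spell out the argument for the dual factorization in Lemma~\ref{lemma:nabla}. Your route is the classical return-difference verification: you posit the candidate factor and confirm it by expanding $\Delta^*(z^{-*})\Delta(z)$, using the telescoping resolvent identity
\begin{align}
P - A^*PA = (z^{-1}I-A^*)P(zI-A) + (z^{-1}I-A^*)PA + A^*P(zI-A)\nn
\end{align}
together with $B_u^*PA = (I+B_u^*PB_u)K_{\text{lqr}}$ and the Riccati equation to make the cross terms collapse onto $Q$. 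The paper's own technique (visible in its appendix proof of Lemma~\ref{lemma:nabla}) is instead a Popov-function argument: write $I+F^*(z^{-*})F(z)$ as a resolvent-sandwiched center matrix, perturb that center matrix by the Riccati relation without changing the product, and read off the factor from the resulting rank-one-in-blocks completion. The two are algebraically equivalent — the same Riccati cancellation in different packaging — but the Popov method \emph{derives} the factor while yours \emph{verifies} a given one, which is entirely adequate here since the lemma states $\Delta$ explicitly. Your treatment of the inverse is also right and matches what the paper uses later in the proof of Theorem~\ref{th:SC}: the matrix inversion lemma gives $\Delta^{-1}(z) = (I - K_{\text{lqr}}(zI-A_K)^{-1}B_u)(I+B_u^*PB_u)^{-1/2}$ with $A_K = A - B_uK_{\text{lqr}}$ stable because $P$ is the stabilizing solution, whence causality and boundedness on the unit circle. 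Two minor blemishes, neither a gap: your displayed return-difference identity has a typo (the left factor should read $(I + B_u^*(z^{-1}I-A^*)^{-1}K_{\text{lqr}}^*)$; as written it drops a $B_u$ and misplaces the conjugation, though your prose makes the intent clear), and when invoking existence and uniqueness of the stabilizing $P$ you should note explicitly that $Q \succ 0$ supplies detectability (indeed observability) of $(A, Q^{1/2})$, which together with stabilizability of $(A,B_u)$ is what the standard existence theorem requires.
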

The proof of Lemma \ref{lemma:Delta} is standard, e.g., \cite[Lemma $2$]{sabag2021regret} and is thus omitted. The following lemma concerns with the dual canonical factorization in Lemma \ref{lemma:Delta}, and is required for the computation of $M(z)$ in Lemma \ref{lemma:M_general}.
\begin{lemma}\label{lemma:nabla}
Assume that $(A,Q)$ is detectable and $(A,B_u)$ is stabilizable. The transfer function $I+F(z)F^\ast(z^{-\ast})$ can be factored as $\nabla(z) \nabla^\ast(z^{-\ast}) $ with
\begin{align}
    \nabla(z)&= (Q^{1/2} \caus{A}K_T+I)R_T^{1/2},
\end{align}
where $T$ is the stabilizing solution of the Riccati equation
\begin{align}
    T &= ATA^\ast + B_u B_u^\ast - K_T (I+ Q^{1/2}TQ^{1/2})K_T^\ast,
\end{align}
$K_T = ATQ^{1/2}(I+ Q^{1/2}TQ^{1/2})^{-1}$, $R_T = R_T^{1/2}R_T^{1/2} = I+ Q^{1/2}TQ^{1/2}$, and $Q= Q^{1/2}Q^{1/2}$. Moreover, its inverse $\nabla(z)^{-1}$ is causal and bounded on the unit circle.
\end{lemma}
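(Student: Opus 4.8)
The plan is to prove Lemma \ref{lemma:nabla} as a \emph{dual} of Lemma \ref{lemma:Delta}, exploiting the formal symmetry between the factorization of $I+F^\ast(z^{-\ast})F(z)$ (which produces $\Delta$ via the control Riccati equation in $P$) and the factorization of $I+F(z)F^\ast(z^{-\ast})$ (which should produce $\nabla$ via the dual, or ``filtering-type,'' Riccati equation in $T$). First I would recall that $F(z)=Q^{1/2}\caus{A}B_u$, so that the transfer function of interest is
\begin{align}
I+F(z)F^\ast(z^{-\ast}) = I + Q^{1/2}\caus{A}B_u B_u^\ast(z^{-1}I-A^\ast)^{-1}Q^{1/2}. \nn
\end{align}
The key structural observation is that this is exactly the object whose spectral factorization is governed by the Riccati equation for $T$ stated in the lemma: the pair $(A^\ast,Q^{1/2})$ plays the role of the ``controllability'' pair and $B_uB_u^\ast$ plays the role of the process-noise covariance. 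The natural route is therefore to either (i) transpose the problem and apply Lemma \ref{lemma:Delta} to the dual system $(A^\ast, Q^{1/2})$ with ``control weight'' $B_uB_u^\ast$, or (ii) verify the claimed factorization directly.

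For the direct verification, the central step is to confirm the algebraic identity
\begin{align}
\nabla(z)\nabla^\ast(z^{-\ast}) = I + F(z)F^\ast(z^{-\ast}), \nn
\end{align}
with $\nabla(z) = (Q^{1/2}\caus{A}K_T + I)R_T^{1/2}$. I would substitute the explicit form of $\nabla$, expand the product $\nabla(z)\nabla^\ast(z^{-\ast})$, and collect terms into a constant part, a strictly-causal part ($\caus{A}$-dependent), a strictly-anticausal part ($(z^{-1}I-A^\ast)^{-1}$-dependent), and a ``cross'' part $\caus{A}(\cdots)(z^{-1}I-A^\ast)^{-1}$. The Riccati equation for $T$ together with the definitions $K_T=ATQ^{1/2}R_T^{-1}$ and $R_T=I+Q^{1/2}TQ^{1/2}$ is precisely what makes these grouped terms match the expansion of $I+F(z)F^\ast(z^{-\ast})$. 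The algebraic mechanism is the standard spectral-factorization completion-of-squares argument, where the Riccati equation is exactly the condition that eliminates the undesired cross-terms and leaves a factorable form; the resolvent identity $\caus{A}-\,(z^{-1}I-A^\ast)^{-1}$ manipulations reduce everything to the single matrix equation in $T$.

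Finally, I would establish that $\nabla^{-1}(z)$ is causal and bounded on the unit circle. By the matrix-inversion lemma applied to $\nabla(z)=(Q^{1/2}\caus{A}K_T+I)R_T^{1/2}$, the inverse is
\begin{align}
\nabla^{-1}(z) = R_T^{-1/2}\bigl(I - Q^{1/2}(zI-A_T)^{-1}K_T\bigr), \nn
\end{align}
where $A_T$ is the closed-loop matrix associated with the $T$-Riccati equation (matching $A_T$ in \eqref{eq:closed-loop} up to the identifications in the lemma). Causality of $\nabla^{-1}$ is immediate from this form, and boundedness on the unit circle follows once $A_T$ is shown to be stable, which is guaranteed by the \emph{stabilizing} choice of $T$ under the stated detectability of $(A,Q)$ and stabilizability of $(A,B_u)$. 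I expect the main obstacle to be the bookkeeping in the cross-term cancellation of the factorization identity: correctly invoking the Riccati equation to absorb the mixed causal/anticausal terms, and keeping the square-root factors $R_T^{1/2}$ and the gain $K_T$ consistent throughout. This is the same completion-of-squares computation that underlies Lemma \ref{lemma:Delta}, so rather than redo it from scratch I would most cleanly appeal to duality: apply the already-established Lemma \ref{lemma:Delta} to the transposed system and then transpose the resulting factorization, which converts $\Delta$ into $\nabla$ and the control Riccati equation into the dual Riccati equation for $T$ automatically.
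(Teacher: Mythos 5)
Your fallback route (ii) is essentially the paper's own proof, though the paper organizes the completion of squares differently from how you describe it. The paper writes
\begin{align}
I+F(z)F^\ast(z^{-\ast}) = \begin{pmatrix} Q^{1/2}\caus{A} & I \end{pmatrix}\begin{pmatrix} B_uB_u^\ast & 0\\ 0 & I\end{pmatrix}\begin{pmatrix} \anti{A^\ast}Q^{1/2} \\ I \end{pmatrix}\nn
\end{align}
as a Popov function, replaces the center matrix by the equivalent one with blocks $B_uB_u^\ast - T + ATA^\ast$, $ATQ^{1/2}$, $Q^{1/2}TA^\ast$, $R_T$ (an exchange that leaves the Popov function unchanged for \emph{any} Hermitian $T$, by the resolvent identities), and then factors via a Schur complement: the Riccati equation is invoked not to cancel mixed causal/anticausal cross terms directly, as you suggest, but to zero the residual $(1,1)$ block $B_uB_u^\ast - T + ATA^\ast - ATQ^{1/2}R_T^{-1}Q^{1/2}TA^\ast$, making the center matrix rank-deficient and hence factorable as $\bigl(\begin{smallmatrix}K_T \\ I\end{smallmatrix}\bigr)R_T\bigl(\begin{smallmatrix}K_T \\ I\end{smallmatrix}\bigr)^\ast$. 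This is the same algebra you anticipate, with the bookkeeping you worry about handled in one clean step. Your treatment of the inverse is correct and in fact more explicit than the paper's: the form $\nabla^{-1}(z)=R_T^{-1/2}(I-Q^{1/2}\caus{A_T}K_T)$ with $A_T=A-K_TQ^{1/2}$ stable is left implicit in the paper's proof (which only asserts existence of the stabilizing $T$) and is only used later, in the proof of Lemma \ref{lemma:M}.

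Your preferred route (i), duality, is genuinely different from the paper's proof, but as stated it has a concrete hypothesis mismatch you must repair: Lemma \ref{lemma:Delta} assumes $Q\succ 0$, whereas in the transposed problem the role of the state weight is played by $B_uB_u^\ast$, which is only positive \emph{semi}definite whenever $B_u$ is not full row rank (the typical case $p<n$). So you cannot cite Lemma \ref{lemma:Delta} verbatim on the dual system $(A^\ast,Q^{1/2})$; you need its standard strengthening in which strict positivity of the weight is replaced by detectability of the dualized pair --- which is exactly what the stabilizability of $(A,B_u)$ assumed in Lemma \ref{lemma:nabla} provides, and is plausibly why the paper proves the dual factorization directly rather than by citation. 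The paper does acknowledge this duality, but only in Remark 1, as a computational device for obtaining $T$ from the LQR solution $P$ via $T=O(I-PO)^{-1}$; your approach would promote that remark to the proof itself, which is attractive (no repeated computation) provided you state and justify the psd-weight version of the factorization lemma.
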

The following lemma is for the factorization of the clairvoyant cost operator.
\begin{lemma}\label{lemma:M_general}
Assume $B_w$ is a full-column rank. A causal transfer function $M(z)$ that satisfies the factorization
\begin{align}
  M^\ast(z^{-\ast})M(z) = T^\ast_{K_0}(z)T_{K_0}(z)
\end{align}
is given by
\begin{align}
    M(z)&= R_M^{1/2}(K_M \caus{A_T}B_w + I),
\end{align}
where $M$ is the stabilizing solution to the Riccati equation
\begin{align}
    M&= A_T^\ast M A_T + A_T^\ast Q^{1/2}R_T^{-1}Q^{1/2} A_T - K_M^\ast R_M^{-1}K_M\nn,
\end{align}
with $R_M = B_w^\ast Q^{1/2}R_T^{-1}Q^{1/2} B_w + B_w^\ast MB_w$ and $K_M = R_M^{-1}(B_w^\ast MA_T + B_w^\ast R_T^{-1} Q^{1/2}A_T)$. Moreover, its inverse
\begin{align}
    M^{-1}(z)&= (I - K_M \caus{A_M}B_w )R_M^{-/2}
\end{align}
exists and is bounded on the unit circle as $A_M = A_T - B_wK_M$ is a stable matrix.
\end{lemma}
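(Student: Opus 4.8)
My plan is to realize the claimed factorization as a standard (square) spectral factorization and then to identify its stabilizing Riccati solution with the $M$ in the statement. First I would rewrite the clairvoyant cost with the identity already recorded above, $T_{\mathcal K_0}^\ast T_{\mathcal K_0} = \mathcal G^\ast(I+\mathcal F\mathcal F^\ast)^{-1}\mathcal G$, and then invoke the dual factorization of Lemma \ref{lemma:nabla}, $I+F(z)F^\ast(z^{-\ast}) = \nabla(z)\nabla^\ast(z^{-\ast})$ with $\nabla^{-1}$ causal and bounded. This turns the density into a genuine square factorization $T_{K_0}^\ast(z^{-\ast})T_{K_0}(z) = H^\ast(z^{-\ast})H(z)$ with the causal transfer function $H(z)\triangleq \nabla^{-1}(z)G(z)$, so the task reduces to finding the spectral factor of $H^\ast H$ whose inverse is bounded on the circle.

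The key simplification is a low-order realization of $H$. Applying the state-space inversion lemma to Lemma \ref{lemma:nabla} gives $\nabla^{-1}(z)=R_T^{-1/2}(I-Q^{1/2}\caus{A_T}K_T)$, using $A-K_TQ^{1/2}=A_T$; multiplying by $G(z)=Q^{1/2}\caus{A}B_w$ and collapsing the two resolvents with the identity $\caus{A_T}K_TQ^{1/2}\caus{A}=\caus{A}-\caus{A_T}$ yields the clean form $H(z)=C_T\caus{A_T}B_w$, where $C_T\triangleq R_T^{-1/2}Q^{1/2}$ satisfies $C_T^\ast C_T=Q^{1/2}R_T^{-1}Q^{1/2}$, exactly the state weighting appearing in the Riccati equation for $M$. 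Thus $H$ is strictly proper with stable dynamics $A_T$ (Lemma \ref{lemma:nabla}), input $B_w$, and output $C_T$.

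Next I would carry out the spectral factorization of $H^\ast H=B_w^\ast\anti{A_T}C_T^\ast C_T\caus{A_T}B_w$. Taking the candidate $M(z)=R_M^{1/2}(I+K_M\caus{A_T}B_w)$ and expanding $M^\ast(z^{-\ast})M(z)$ with $A_T\caus{A_T}=z\caus{A_T}-I$ and its para-conjugate, the product collapses, once $N\triangleq M+C_T^\ast C_T$, $R_M=B_w^\ast NB_w$, and $K_M=R_M^{-1}B_w^\ast NA_T$ (the stated gains) are substituted, to $B_w^\ast\anti{A_T}(NB_wR_M^{-1}B_w^\ast N)\caus{A_T}B_w$. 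Hence $M^\ast(z^{-\ast})M(z)-H^\ast H=B_w^\ast\anti{A_T}\,\Xi\,\caus{A_T}B_w$ with the constant matrix $\Xi\triangleq NB_wR_M^{-1}B_w^\ast N-C_T^\ast C_T$, and it remains to show this vanishes. Using the decomposition $\anti{A_T}\Xi\caus{A_T}=z^{-1}\anti{A_T}Y+zY\caus{A_T}-Y$, valid whenever $Y-A_T^\ast YA_T=\Xi$, the Riccati equation enters decisively: it is equivalent to $A_T^\ast\Xi A_T=A_T^\ast MA_T-M$, so $Y=-A_T^{-\ast}MA_T^{-1}$ solves the Stein equation, and every surviving term carries the factor $MA_T^{-1}B_w$, which is identically zero since $M=A_T^\ast(N-NB_wR_M^{-1}B_w^\ast N)A_T$ gives $MA_T^{-1}B_w=A_T^\ast(NB_w-NB_wR_M^{-1}R_M)=0$. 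I expect this identity-driven cancellation to be the technical heart of the argument; a companion identity $K_MA_T^{-1}B_w=I$ explains the otherwise puzzling fact that $H$ is strictly proper while $M(\infty)=R_M^{1/2}\neq0$ — the canonical factor is obtained from the naive (strictly proper) $\nabla^{-1}G$ by an all-pass that relocates its zero at infinity into the disk, which is exactly what produces a factor with bounded inverse on the circle.

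Finally, the stated inverse follows by applying the state-space inversion lemma to $M(z)=R_M^{1/2}(I+K_M\caus{A_T}B_w)$, giving $M^{-1}(z)=(I-K_M\caus{A_M}B_w)R_M^{-1/2}$ with $A_M=A_T-B_wK_M$; its boundedness on the unit circle is equivalent to $A_M$ being stable, which is precisely the defining property of the stabilizing solution $M$ of the Riccati equation, completing the argument.
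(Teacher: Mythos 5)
Your proof is correct and, at its core, it is the same argument as the paper's: you reduce $T_{K_0}^\ast T_{K_0}$ to the para-Hermitian product of the causal factor $\nabla^{-1}(z)G(z)=R_T^{-1/2}Q^{1/2}\caus{A_T}B_w$ (the paper records exactly this realization, in the equivalent form $z\nabla^{-1}(z)G(z)=R_T^{-/2}Q^{1/2}(I+A_T\caus{A_T})B_w$), and you then spectrally factor it with the same candidate $M(z)=R_M^{1/2}(I+K_M\caus{A_T}B_w)$, the same Riccati equation, and the same inversion-lemma computation of $M^{-1}(z)$. Your hand expansion of $M^\ast(z^{-\ast})M(z)$ followed by the telescoping $\anti{A_T^\ast}\,\Xi\,\caus{A_T}=z^{-1}\anti{A_T^\ast}Y+zY\caus{A_T}-Y$ is precisely the algebra the paper packages as ``replacing $\Lambda$ by $\Lambda(M)$ in the Popov function leaves the product unchanged; choose $M$ to zero the Schur complement.'' You also silently and correctly resolved the paper's inconsistent normalization of the Riccati equation: given $K_M=R_M^{-1}B_w^\ast N A_T$, the quadratic term must be $K_M^\ast R_M K_M$, i.e., $M=A_T^\ast N_\perp A_T$ with $N_\perp\triangleq N-NB_wR_M^{-1}B_w^\ast N$, which is what you use.

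The one genuine blemish is your choice $Y=-A_T^{-\ast}MA_T^{-1}$, together with the companion identities $MA_T^{-1}B_w=0$ and $K_MA_T^{-1}B_w=I$: these presume $A_T$ invertible, which the standing assumptions do not guarantee --- $A_T=A\bigl(I-TQ^{1/2}R_T^{-1}Q^{1/2}\bigr)$ is singular whenever $A$ is, and only stabilizability of $(A,B_u)$ is assumed. The fix stays entirely inside your argument: since the Riccati equation reads $M=A_T^\ast N_\perp A_T$, your $\Xi = NB_wR_M^{-1}B_w^\ast N - C_T^\ast C_T$ equals $M-N_\perp = A_T^\ast N_\perp A_T - N_\perp$, so $Y=-N_\perp$ solves the Stein equation $Y-A_T^\ast Y A_T=\Xi$ directly; moreover $N_\perp B_w=NB_w-NB_wR_M^{-1}(B_w^\ast NB_w)=0$ annihilates all three surviving terms without ever inverting $A_T$. (By stability of $A_T$ the Stein equation has a unique solution, so your $Y$ coincides with $-N_\perp$ whenever both are defined.) Finally, a purely notational slip: the anti-causal resolvents in your quadratic forms should act on $A_T^\ast$, i.e., read $\anti{A_T^\ast}$ rather than $\anti{A_T}$.
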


The following lemma provides the decomposition of the transfer function $\Delta(z)K_0(z)M^{-1}(z)$ into its strictly-causal and anticausal counterparts.

\begin{lemma}\label{lemma:decomposition}
The product of the transfer functions $\Delta(z)K_0(z)M^{-1}(z) = -\Delta^{-\ast}(z^{-\ast}) F^\ast(z^{-\ast}) G(z) M^{-1}(z)$ can be written as the sum of
\begin{align}\label{eq:lemma_decomp_SC}
    A(z)&= - z^{-1} (I + B_u^\ast PB_u)^{-/2} B_u^\ast \anti{A_K^\ast}\nn\\
    &\ \ \ \cdot (P-A_K^\ast U)B_wR_M^{-/2}\nn\\
    C_1(z)&= - (I + B_u^\ast PB_u)^{-/2} B_u^\ast PA\caus{A} B_w M^{-1}(z)\nn\\
    C_2(z)&\mspace{-1mu}= \mspace{-1mu}(I \mspace{-4mu}+\mspace{-4mu} B_u^\ast PB_u)^{-/2} B_u^\ast U \caus{A_M} B_wR_M^{-/2},
\end{align}
where $U$ solves the Sylvester equation $U \!=\! A_K^\ast U A_M \!+\! P B_wK_M$. \looseness=-1
\end{lemma}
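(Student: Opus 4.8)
The plan is to compute the product $\Delta(z)K_0(z)M^{-1}(z)$ in state-space form and then separate it into its strictly-causal and anti-causal parts. The starting point is the identity $\Delta(z)K_0(z)M^{-1}(z) = -\Delta^{-\ast}(z^{-\ast}) F^\ast(z^{-\ast}) G(z) M^{-1}(z)$, which follows by substituting $\mathcal K_0 = -(I+\mathcal F^\ast\mathcal F)^{-1}\mathcal F^\ast\mathcal G$ and using the factorization $\Delta^\ast\Delta = I+\mathcal F^\ast\mathcal F$ from Lemma~\ref{lemma:Delta}, so that $\Delta K_0 = -\Delta^{-\ast}\mathcal F^\ast\mathcal G$. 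The key structural observation is that $\Delta^{-\ast}(z^{-\ast})F^\ast(z^{-\ast})$ is strictly anti-causal (it involves $\anti{A_K^\ast}$-type terms), while $G(z)M^{-1}(z)$ mixes causal pieces. So the first task is to obtain clean state-space realizations for each factor: $F^\ast(z^{-\ast})$ and $\Delta^{-\ast}(z^{-\ast})$ from Lemma~\ref{lemma:Delta}, $G(z)$ from \eqref{eq:FG}, and $M^{-1}(z) = (I - K_M \caus{A_M}B_w)R_M^{-/2}$ from Lemma~\ref{lemma:M_general}.

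\textbf{The separation step is the crux.} I would write the full product as a cascade of a strictly anti-causal block (poles governed by $A_K^\ast$, i.e. in $z^{-1}$) and a causal block (poles governed by $A$ and $A_M$, i.e. in $z$). The standard technique is a partial-fraction / Sylvester-equation decomposition: introduce an auxiliary matrix — here $U$, solving $U = A_K^\ast U A_M + PB_wK_M$ — that ``transfers'' the coupling between the anti-causal $A_K^\ast$ dynamics and the causal $A_M$ dynamics. Concretely, one writes a resolvent-product identity of the form
\begin{align}
\anti{A_K^\ast}\, X\, \caus{A_M} = \anti{A_K^\ast} U + U \caus{A_M} + (\text{constant}),\nn
\end{align}
where $X$ is the relevant coupling matrix and $U$ is chosen precisely so that the cross term telescopes; verifying that the Sylvester equation for $U$ is exactly the condition that makes this split hold is the main algebraic content. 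The first resolvent piece, carrying the $\anti{A_K^\ast}$ factor and the explicit $z^{-1}$, becomes the anti-causal term $A(z)$; the pieces carrying $\caus{A}$ and $\caus{A_M}$ become the strictly-causal terms $C_1(z)$ and $C_2(z)$.

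\textbf{Then I would match constants.} The term $C_1(z)$ should collect the part of $G(z)M^{-1}(z)$ whose realization uses the open-loop $\caus{A}$ dynamics together with the full $M^{-1}(z)$ factor, producing $-(I+B_u^\ast PB_u)^{-/2}B_u^\ast PA\caus{A}B_w M^{-1}(z)$; this is the contribution in which $\Delta^{-\ast}F^\ast$ acts on $G$ through the $PA$ emerging from $K_{\text{lqr}}$. The term $C_2(z)$ collects the residual causal contribution expressed through the closed-loop $A_M$ dynamics, with the matrix $U$ appearing as the numerator weight $B_u^\ast U$. The anti-causal factor $A(z)$ carries the $(P - A_K^\ast U)B_w R_M^{-/2}$ coefficient, where the combination $P - A_K^\ast U$ is exactly what remains after the Sylvester substitution $PB_wK_M = U - A_K^\ast U A_M$ is used to eliminate the $K_M$ coupling. \textbf{The main obstacle} will be bookkeeping: ensuring that the three realizations are combined with consistent similarity transforms so that the single $z^{-1}$ factor lands on $A(z)$ (reflecting strict anti-causality), that no spurious causal/anti-causal cross terms survive, and that the constants $(I+B_u^\ast PB_u)^{-/2}$, $R_M^{-/2}$, and the $B_u^\ast(\cdot)B_w$ sandwiches come out in exactly the stated positions. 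I would verify the decomposition by checking that $A(z)+C_1(z)+C_2(z)$ reproduces the product transfer function — most cleanly by confirming agreement of the Markov parameters on both the causal and anti-causal sides, which reduces the verification to the single Sylvester identity for $U$.
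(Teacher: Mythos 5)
Your proposal is correct and follows essentially the same route as the paper: collapse $\Delta^{-\ast}(z^{-\ast})F^\ast(z^{-\ast})$ into the strictly anti-causal form $(I+B_u^\ast PB_u)^{-/2}B_u^\ast\anti{A_K^\ast}Q^{1/2}$, then separate the anti-causal/causal resolvent products via Sylvester-type identities, with $U = A_K^\ast U A_M + PB_wK_M$ yielding exactly the $(P-A_K^\ast U)$ coefficient in $A(z)$ and the $B_u^\ast U\caus{A_M}$ term $C_2(z)$. The only nuance is that the paper applies the resolvent-splitting trick twice --- first with $P$ (via the identity $\anti{A_K^\ast}Q\caus{A} = \anti{A_K^\ast}A_K^\ast P + PA\caus{A} + P$, which peels off $C_1(z)$; your attribution of the $PA$ to $K_{\text{lqr}}$ is slightly imprecise on this point) and then with $U$ against the $\caus{A_M}$ dynamics coming from $M^{-1}(z)$ --- whereas your plan folds both into one generic step, a presentational rather than substantive difference.
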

Note that $A(z)$ is a (stable) anti-causal transfer function while $C_1(z)$ and $C_2(z)$ are strictly causal transfer functions.

\begin{lemma}\label{lemma:our_nehari}
{The optimal solution} to the Nehari problem with the anticausal transfer function $z{A}(z)$ (given in \eqref{eq:lemma_decomp_SC}) is
\begin{align}\label{eq:sol_Nehari}
& {K}'(z) \\
&\ =
- (I + B_u^\ast PB_u)^{-/2}B_u^\ast \Pi \caus{F_\gamma} K_\gamma R_M^{-/2},\nn
\end{align}
where
\begin{align}\label{eq:th_Nehari_gain}
    K_\gamma &=  (I - A_K Z_\ast A_K^\ast \Pi )^{-1}A_K Z_\ast (P-A_K^\ast U)B_w\\
    F_\gamma &= A_K - K_\gamma R_M^{-1}B_w^\ast (P- U^\ast A_K),\nn
\end{align}
$\Pi$ and ${Z}_\ast$ are given in \eqref{eq:lyapunov}.
\end{lemma}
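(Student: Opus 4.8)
The plan is to recognize that $zA(z)$ is a strictly anti-causal, finite-dimensional transfer function, read off its state-space realization from Lemma \ref{lemma:decomposition}, and then invoke the explicit state-space solution of the Nehari problem from \cite{sabag2021regret}. Using $z\anti{A_K^\ast}=(I-zA_K^\ast)^{-1}$, the anti-causal term $A(z)$ in \eqref{eq:lemma_decomp_SC} yields
\begin{align*}
zA(z) &= -(I+B_u^\ast PB_u)^{-1/2}B_u^\ast(I-zA_K^\ast)^{-1}\\
&\quad\ \times(P-A_K^\ast U)B_wR_M^{-1/2},
\end{align*}
whose coefficient of $z^{k}$ equals $C_\Gamma A_\Gamma^{k-1}B_\Gamma$ for $k\ge1$, with realization $A_\Gamma=A_K^\ast$, $B_\Gamma=(P-A_K^\ast U)B_wR_M^{-1/2}$, and $C_\Gamma=-(I+B_u^\ast PB_u)^{-1/2}B_u^\ast$. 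Since $A_K=A-B_uK_{\text{lqr}}$ is stable (Lemma \ref{lemma:Delta}), this realization is stable and the Hankel/Nehari machinery applies. The approximant produced for the strictly anti-causal $zA(z)$ is causal; dividing by $z$ returns the strictly causal $\mathcal K'$ that solves \eqref{eq:op_comp_final_nehari}, which is why \eqref{eq:sol_Nehari} is expressed through the strictly causal factor $\caus{F_\gamma}$.

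Next I would compute the controllability and observability gramians of this realization, i.e.\ the solutions of $\mathcal P=A_\Gamma\mathcal P A_\Gamma^\ast+B_\Gamma B_\Gamma^\ast$ and $\mathcal Q=A_\Gamma^\ast\mathcal Q A_\Gamma+C_\Gamma^\ast C_\Gamma$. Substituting $A_\Gamma=A_K^\ast$, together with $B_\Gamma B_\Gamma^\ast=(P-A_K^\ast U)B_wR_M^{-1}B_w^\ast(P-A_K^\ast U)^\ast$ and $C_\Gamma^\ast C_\Gamma=B_u(I+B_u^\ast PB_u)^{-1}B_u^\ast$, these are precisely the Lyapunov equations defining $\Pi$ and $Z_1$ in \eqref{eq:lyapunov}; hence $\mathcal P=\Pi$ and $\mathcal Q=Z_1$. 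The optimal Nehari value equals the Hankel norm $\sqrt{\lambda_{\max}(\mathcal P\mathcal Q)}=\sqrt{\lambda_{\max}(Z_1\Pi)}$, so at optimality the bound in \eqref{eq:der_some} reads $\gamma^2-1=\lambda_{\max}(Z_1\Pi)$, reproducing the competitive ratio of Theorem \ref{th:SC}. Because the Lyapunov equation for $Z_\gamma$ is linear in its forcing term, $Z_\ast=\gamma^{-2}Z_1$ at $\gamma^2=\lambda_{\max}(Z_1\Pi)$, which is the object appearing in $K_\gamma$.

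It then remains to extract the optimal approximant itself. Feeding the realization $(A_\Gamma,B_\Gamma,C_\Gamma)$ and the gramians $(\Pi,Z_1)$ into the state-space Nehari formula of \cite{sabag2021regret}, evaluated at the boundary $\gamma^2=\lambda_{\max}(Z_1\Pi)$, and undoing the $z$-shift, I expect the left output factor $-(I+B_u^\ast PB_u)^{-1/2}B_u^\ast\Pi$ to descend from $C_\Gamma$ composed with a gramian, the gain $K_\gamma$ to be the resulting output-injection term (with the rescaled resolvent $(I-A_KZ_\ast A_K^\ast\Pi)^{-1}$ coming from the boundary normalization), and $F_\gamma=A_K-K_\gamma R_M^{-1}B_w^\ast(P-U^\ast A_K)$ to be the closed-loop dynamics of the approximant.

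The main obstacle is this final matching step: the general state-space Nehari solution of \cite{sabag2021regret} is stated somewhat implicitly, and collapsing it into the compact closed forms \eqref{eq:th_Nehari_gain} requires several Lyapunov/Sylvester manipulations, in particular exploiting the Sylvester equation \eqref{lya:U} for $U$ and the definition $A_M=A_T-B_wK_M$ to simplify the intermediate expressions. A second subtlety, and precisely the point that permits an \emph{optimal} (rather than only sub-optimal) solution, is that the construction is evaluated at the boundary value of $\gamma$; one must verify that the inverse $(I-A_KZ_\ast A_K^\ast\Pi)^{-1}$ remains well defined there (the extra $A_K,A_K^\ast$ factors, absent from the raw product $Z_\ast\Pi$, are what guarantee well-posedness at the boundary) and that the resulting $F_\gamma$ is stable, so that $\mathcal K'$ is a legitimate bounded strictly causal operator. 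Careful bookkeeping of the $z$-shift between $zA(z)$ and $A(z)$ throughout is what ultimately ties the stable realization to the strictly causal form claimed in \eqref{eq:sol_Nehari}.
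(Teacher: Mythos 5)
Your proposal is correct and takes essentially the same route as the paper, whose proof of this lemma is a one-line invocation of \cite[Th.~9]{sabag2021regret} applied to the strictly anticausal function $zA(z)$ from Lemma~\ref{lemma:decomposition}: your identification of the realization $\bigl(A_\Gamma,B_\Gamma,C_\Gamma\bigr)=\bigl(A_K^\ast,\,(P-A_K^\ast U)B_wR_M^{-/2},\,-(I+B_u^\ast PB_u)^{-/2}B_u^\ast\bigr)$, of the Gramians as $\Pi$ and $Z_1$, of the Hankel norm $\lambda_{\max}(Z_1\Pi)$, and of $Z_\ast=\gamma^{-2}Z_1$ at the boundary is precisely the bookkeeping that applying that theorem entails. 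One cosmetic slip: your displayed formula $zA(z)=-(I+B_u^\ast PB_u)^{-/2}B_u^\ast(I-zA_K^\ast)^{-1}(P-A_K^\ast U)B_wR_M^{-/2}$ is missing a factor of $z$ (as written it equals $A(z)$, which has a nonzero constant term), but the Markov parameters $C_\Gamma A_\Gamma^{k-1}B_\Gamma$ for $k\ge 1$ that you actually use are the correct ones for $zA(z)$, so the argument is unaffected.
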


We proceed to prove Theorem \ref{th:SC}.
\begin{proof}[Proof of Theorem \ref{th:SC}]
{In order to} obtain the optimal competitive ratio, we apply Theorem \cite[Th. 9]{sabag2021regret} with the strictly anti-causal transfer function $zA(z)$ in Lemma \ref{lemma:decomposition}. This results in that the optimal competitive ratio in Theorem \ref{th:SC} is $1+\lambda_{\max}(Z_1\Pi)$. We proceed to derive the optimal competitive-ratio controller in \eqref{eq:op_optcon}
\begin{align}\label{eq:proof_th_cont}
    {K}(z)&= \Delta^{-1}(z)(K'(z)+ C_1(z) + C_2(z))M(z).
\end{align}
where $C_1(z), C_2(z)$ are given in Lemma \ref{lemma:decomposition}, $M(z)$ is given in Lemma \ref{lemma:M_general}, and Lemma \ref{lemma:Delta} gives
\begin{align}
    \Delta^{-1}(z)&= (I + K_{\text{lqr}} \caus{A}B_u)^{-1} (I + B_u^\ast P B_u)^{-/2}\nn\\
    &= (I - K_{\text{lqr}} \caus{A_K}B_u) (I + B_u^\ast P B_u)^{-/2}.\nn
    \end{align}

We compute each product in \eqref{eq:proof_th_cont} separately. The first product in \eqref{eq:proof_th_cont} is simply
\begin{align}\label{eq:dec_first}
    \Delta^{-1}(z)C_1(z)M(z) &= - K_{\text{lqr}} \caus{A_K} B_w.
\end{align}
The second product in \eqref{eq:proof_th_cont} is
\begin{align}\label{eq:dec_second}
    &\Delta^{-1}(z)C_2(z)M(z)\nn\\
    &= (I - K_{\text{lqr}} \caus{A_K}B_u) (I + B_u^\ast P B_u)^{-1}\nn\\
    &\cdot B_u^\ast U \caus{A_M} B_w (I + K_M \caus{A_T}B_w)\nn\\
    &= (I - K_{\text{lqr}} \caus{A_K}B_u) (I + B_u^\ast P B_u)^{-1} \nn\\
    &\cdot B_u^\ast U \caus{A_M} (I + B_wK_M \caus{A_T})B_w\nn\\
    &=(I - K_{\text{lqr}} \caus{A_K}B_u)  \nn\\
    &\ \ \cdot (I + B_u^\ast P B_u)^{-1} B_u^\ast U \caus{A_T} B_w,
\end{align}
where the last equality follows from $\caus{A_M} = \caus{A_T} (I + B_wK_M\caus{A_T})^{-1}$. The last product in \eqref{eq:proof_th_cont} is \looseness=-1
\begin{align}\label{eq:dec_third}
    &\Delta^{-1}(z){K}'(z)M(z)\nn\\
    &= - (I - K_{\text{lqr}} \caus{A_K}B_u) (I + B_u^\ast PB_u)^{-1}\nn\\
    &\cdot B_u^\ast \Pi \caus{F_\gamma} K_\gamma (K_M \caus{A_T}B_w + I).
\end{align}
Combining \eqref{eq:dec_first}-\eqref{eq:dec_third}, have
\begin{align}
    K(z)&=
    \begin{pmatrix}
    \Lambda U & - \Lambda \Pi & - K_{\text{lqr}}
    \end{pmatrix}\nn\\
    &\cdot \caus{\begin{pmatrix}
    A_T&0&0\\
     K_\gamma K_M &F_\gamma&0\\
    B_u\Lambda U & - B_u \Lambda \Pi&A_K
    \end{pmatrix}}
    \begin{pmatrix}
    B_w\\
    K_\gamma\\
    B_w
    \end{pmatrix}
\end{align}
with $\Lambda = (I + B_u^\ast P B_u)^{-1} B_u^\ast$. The equivalent time-domain state-space is
\begin{align}
    \begin{pmatrix}
    \xi^1_{t+1}\\
    \xi^2_{t+1}\\
    \xi^3_{t+1}
    \end{pmatrix} &= \begin{pmatrix}
    A_T&0&0\\
     K_\gamma K_M &F_\gamma&0\\
    B_u\Lambda U & - B_u \Lambda \Pi&A_K
    \end{pmatrix}
    \begin{pmatrix}
    \xi^1_{t}\\
    \xi^2_{t}\\
    \xi^3_{t}
    \end{pmatrix}  +
        \begin{pmatrix}
    B_w\\
    K_\gamma\\
    B_w
    \end{pmatrix}w_t \nn\\
    u_t&= \begin{pmatrix}
    \Lambda U & - \Lambda \Pi & - K_{\text{lqr}}
    \end{pmatrix}
    \begin{pmatrix}
    \xi^1_{t}\\
    \xi^2_{t}\\
    \xi^3_{t}
    \end{pmatrix}\nn\\
    &\triangleq \bar{H} \bar{\xi}_t - K_{\text{lqr}}\xi^3_{t}.
\end{align}

We proceed to show that $\xi_t^3$ is in fact the system state $x_t$ which leads to a significant simplification in the implementation of the controller. The equation of the last state is:
\begin{align}
    \xi^3_{t+1}&= B_u \bar{H} \bar{\xi}_t + A_K\xi^3_{t} + B_w w_t\nn\\
    &= B_u \bar{H} \bar{\xi}_t + (A-B_uK_{\text{lqr}})\xi^3_{t} + B_w w_t\nn\\
    &= A\xi^3_{t}  + B_u (\bar{H} \bar{\xi}_t -K_{\text{lqr}}\xi^3_{t}) + B_w w_t\nn\\
    &= A\xi^3_{t}  + B_u u_t + B_w w_t,
\end{align}
which is precisely the evolution of the system state. This completes the derivation of the controller in Theorem \ref{th:SC}.
\end{proof}
\subsection{Proof of Theorem \ref{th:square_SC}}
We start by presenting two technical lemmas. The first is for the simplified factorization of $M(z)$.
\begin{lemma}\label{lemma:M}
If $B_w$ is a square matrix, the causal transfer function
\begin{align}\label{eq:lemma_Msq}
    M(z)&= z\nabla^{-1}(z)G(z) \nn\\
    &= R_T^{-/2} Q^{1/2}(I + A_T\caus{A_T})B_w
\end{align}
satisfies $M^\ast(z)M(z) = T_{K_0}^\ast(z)T_{K_0}(z)$ and
its causal inverse
\begin{align}
    M^{-1}(z)&=  B_w^{-1}(I-z^{-1}A_T)Q^{-/2}R_T^{1/2}.
\end{align}
is bounded on the unit circle.
\end{lemma}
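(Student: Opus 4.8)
The plan is to split the lemma into its three assertions---the factorization $M^\ast M = T_{K_0}^\ast T_{K_0}$, the closed form, and the causal inverse---and to dispose of the factorization almost for free, concentrating the work on the closed form. For the factorization, I would combine the cost identity $T_{\mathcal K_0}^\ast T_{\mathcal K_0} = \mathcal G^\ast(I+\mathcal F\mathcal F^\ast)^{-1}\mathcal G$ with the dual factorization $I + F(z)F^\ast(z^{-\ast}) = \nabla(z)\nabla^\ast(z^{-\ast})$ of Lemma \ref{lemma:nabla} to get $T_{K_0}^\ast(z^{-\ast})T_{K_0}(z) = (\nabla^{-1}G)^\ast(z^{-\ast})\,(\nabla^{-1}G)(z)$, so that $\nabla^{-1}(z)G(z)$ is already a spectral factor. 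It is only strictly causal (because $G$ is), which is exactly why one multiplies by the all-pass $z$: since $zG(z) = Q^{1/2}(I + A\caus{A})B_w$ is causal and $\nabla^{-1}$ is causal (Lemma \ref{lemma:nabla}), the candidate $M(z)=z\nabla^{-1}(z)G(z)$ is causal, and on the unit circle $|z|=1$ the factor $z$ preserves the product, giving $M^\ast(z)M(z) = T_{K_0}^\ast(z)T_{K_0}(z)$.

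For the closed form I would verify rather than form-and-reduce a cascade realization, which keeps the algebra to one line of resolvents. Writing the claim as $R_T^{-/2}Q^{1/2}(I + A_T\caus{A_T})B_w = z\,R_T^{-/2}Q^{1/2}\caus{A_T}B_w$ via $I + A_T\caus{A_T} = z\caus{A_T}$, and using $M = z\nabla^{-1}G$, the assertion is equivalent to $G(z) = \nabla(z)R_T^{-/2}Q^{1/2}\caus{A_T}B_w$. Since $\nabla(z)R_T^{-/2} = I + Q^{1/2}\caus{A}K_T$ and $G(z) = Q^{1/2}\caus{A}B_w$, this reduces to $\caus{A} = \caus{A_T} + \caus{A}K_TQ^{1/2}\caus{A_T}$, which upon right-multiplication by $(zI-A_T)$ collapses to $\caus{A}\,[zI - A_T - K_TQ^{1/2}] = I$. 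Everything therefore hinges on the single identity $A_T = A - K_TQ^{1/2}$.

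To close that gap I would use $K_T = ATQ^{1/2}R_T^{-1}$ and $A_T = A - AT(Q^{-1}+T)^{-1}$ from \eqref{eq:closed-loop}, which turns $A_T = A - K_TQ^{1/2}$ into the push-through identity $(Q^{-1}+T)^{-1} = Q^{1/2}R_T^{-1}Q^{1/2}$ with $R_T = I + Q^{1/2}TQ^{1/2}$; this follows by left-multiplying the right-hand side by $(Q^{-1}+T)$ and simplifying $Q^{-/2}(I + Q^{1/2}TQ^{1/2}) = Q^{-/2} + TQ^{1/2} = Q^{-/2}R_T$. The inverse and its boundedness are then immediate once the closed form is written as $M(z) = R_T^{-/2}Q^{1/2}(I - z^{-1}A_T)^{-1}B_w$ (again using $I + A_T\caus{A_T} = (I - z^{-1}A_T)^{-1}$): because $B_w$ is square and invertible and $Q\succ 0$, inverting factor by factor gives $M^{-1}(z) = B_w^{-1}(I - z^{-1}A_T)Q^{-/2}R_T^{1/2}$, which is affine in $z^{-1}$ and hence bounded on the unit circle, with stability of $A_T$ (guaranteed by Lemma \ref{lemma:nabla}) ensuring $M$ itself has a bounded causal inverse.

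I expect the only real obstacle to be the bookkeeping in the second step: keeping the para-conjugate arguments consistent and, above all, establishing $A_T = A - K_TQ^{1/2}$ cleanly. Once the push-through identity $(Q^{-1}+T)^{-1} = Q^{1/2}R_T^{-1}Q^{1/2}$ is in place, the remaining manipulations are elementary resolvent identities, and the factorization, closed form, and inverse all drop out together.
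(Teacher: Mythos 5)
Your proposal is correct and takes essentially the same route as the paper: choose $M(z)=z\nabla^{-1}(z)G(z)$ as the spectral factor of $T_{K_0}^\ast T_{K_0}$ via Lemma \ref{lemma:nabla} and the identity $T_{\mathcal K_0}^\ast T_{\mathcal K_0}=\mathcal G^\ast(I+\mathcal F\mathcal F^\ast)^{-1}\mathcal G$, note the all-pass factor $z$ restores causality without affecting the product on the unit circle, and invert the closed form factor by factor using the invertibility of $B_w$ to get the bounded (FIR) inverse. The only difference is that you explicitly verify the closed form \eqref{eq:zNablaG} through the resolvent identity $A_T=A-K_TQ^{1/2}$ and the push-through fact $(Q^{-1}+T)^{-1}=Q^{1/2}R_T^{-1}Q^{1/2}$, a computation the paper asserts as following directly from Lemma \ref{lemma:nabla}.
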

The next lemmas are for the simplified decomposition and the solution to the Nehari problem in the case of a square $B_w$.
\begin{lemma}\label{lemma:decomposition_sq}
If $B_w$ is a square matrix, the decomposition of the transfer function $-\Delta^{-\ast}(z^{-\ast}) F^\ast(z^{-\ast}) G(z) M^{-1}(z)$ for the strictly causal scenario is
\begin{align}\label{eq:lemma_decomposition_sq}
    \overline{A}(z)&= - z^{-1} (I + B_u^\ast PB_u)^{-/2} B_u^\ast  \anti{A_K^\ast}\nn\\
    &\cdot (P - A_K^\ast PA_T)Q^{-/2}R_T^{1/2} \nn\\
    \overline{C}(z)&= - (I + B_u^\ast PB_u)^{-/2} B_u^\ast
    P\caus{A}\nn\\
    &\cdot (A-A_T)Q^{-/2}R_T^{1/2}.
\end{align}
\end{lemma}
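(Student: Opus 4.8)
The object to be decomposed is exactly $\Delta(z)K_0(z)M^{-1}(z)$, as recorded in Lemma~\ref{lemma:decomposition}: writing the non-causal controller as $K_0=-(I+\mathcal F^\ast\mathcal F)^{-1}\mathcal F^\ast\mathcal G$ and using the factorization $I+F^\ast(z^{-\ast})F(z)=\Delta^\ast(z^{-\ast})\Delta(z)$ of Lemma~\ref{lemma:Delta} gives $\Delta K_0=-\Delta^{-\ast}(z^{-\ast})F^\ast(z^{-\ast})\mathcal G$, so the task is to split $-\Delta^{-\ast}(z^{-\ast})F^\ast(z^{-\ast})G(z)M^{-1}(z)$ into an anti-causal and a strictly causal part. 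My plan is to first collapse the purely anti-causal prefactor $-\Delta^{-\ast}(z^{-\ast})F^\ast(z^{-\ast})$, then insert the square-$B_w$ form of $M^{-1}(z)$ from Lemma~\ref{lemma:M}, and finally perform one partial-fraction separation on the single genuinely mixed product that survives.

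For the prefactor I would use $\Delta^{-1}(z)=(I-K_{\text{lqr}}\caus{A_K}B_u)(I+B_u^\ast PB_u)^{-/2}$ (from the proof of Theorem~\ref{th:SC}) together with $F^\ast(z^{-\ast})=B_u^\ast\anti{A^\ast}Q^{1/2}$. Two elementary resolvent identities do the work: $K_{\text{lqr}}^\ast B_u^\ast=(B_uK_{\text{lqr}})^\ast=A^\ast-A_K^\ast$, and, writing $A^\ast-A_K^\ast=(z^{-1}I-A_K^\ast)-(z^{-1}I-A^\ast)$, the telescoping $\anti{A_K^\ast}(A^\ast-A_K^\ast)\anti{A^\ast}=\anti{A^\ast}-\anti{A_K^\ast}$. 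These collapse the prefactor to
\begin{align*}
-\Delta^{-\ast}(z^{-\ast})F^\ast(z^{-\ast}) &= -(I+B_u^\ast PB_u)^{-/2}B_u^\ast\anti{A_K^\ast}Q^{1/2}.
\end{align*}
Multiplying on the right by $G(z)M^{-1}(z)$ and using $Q^{1/2}G(z)=Q\caus{A}B_w$ together with $B_wM^{-1}(z)=(I-z^{-1}A_T)Q^{-/2}R_T^{1/2}$ from Lemma~\ref{lemma:M}, as well as $\caus{A}(I-z^{-1}A_T)=z^{-1}\big(I+\caus{A}(A-A_T)\big)$, reduces the whole expression to constant matrices pre- and post-multiplying the single mixed product $\anti{A_K^\ast}\,Q\,\caus{A}$.

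The crux is the separation of $\anti{A_K^\ast}W\caus{A}$ (with $W=Q$) into its anti-causal, diagonal, and strictly causal parts. A short partial-fraction computation gives the identity
\begin{align*}
\anti{A_K^\ast}W\caus{A} &= \anti{A_K^\ast}A_K^\ast N + N + NA\caus{A},
\end{align*}
where $N$ solves the Sylvester equation $N=W+A_K^\ast NA$. The algebraic fact I will invoke is that for $W=Q$ this is solved by $N=P$; this follows from $A_K^\ast PB_u=K_{\text{lqr}}^\ast$ (immediate from the definition of $K_{\text{lqr}}$) and the Riccati equation~\eqref{eq:Riccati}, which together yield the closed-loop identity $P=Q+A_K^\ast PA$. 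With $N=P$ in hand, collecting terms finishes the proof: the anti-causal and diagonal contributions combine, via $Q+A_K^\ast PA=P$, into $(P-A_K^\ast PA_T)$ sandwiched between $z^{-1}\anti{A_K^\ast}$ and $Q^{-/2}R_T^{1/2}$, which is precisely $\overline A(z)$; while the strictly causal contributions combine, via $z^{-1}P\big(I+A\caus{A}\big)=P\caus{A}$, into $\overline C(z)$.

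The main obstacle is the bookkeeping in the separation step — deriving the Sylvester identity cleanly (it is most safely checked as an identity of rational matrix functions rather than via a power series, since $A$ need not be stable) and, above all, tracking the diagonal term. That constant is absorbed into the anti-causal block through the factor $z^{-1}\anti{A_K^\ast}=(I-zA_K^\ast)^{-1}$, which is consistent with the convention already used in Lemma~\ref{lemma:decomposition} (there too the anti-causal block carries a $z^{-1}\anti{A_K^\ast}$). Once the two closed-loop identities $A_K^\ast PB_u=K_{\text{lqr}}^\ast$ and $P=Q+A_K^\ast PA$ are in place, matching to the stated $\overline A(z)$ and $\overline C(z)$ is routine.
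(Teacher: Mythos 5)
Your proposal is correct and is essentially the paper's own argument: the paper omits this proof as following ``steps similar to Lemma~\ref{lemma:decomposition}'', and your derivation is exactly that adaptation --- collapse $-\Delta^{-\ast}(z^{-\ast})F^\ast(z^{-\ast})$ to $-(I+B_u^\ast PB_u)^{-/2}B_u^\ast\anti{A_K^\ast}Q^{1/2}$ as in \eqref{eq:proof_decomposition_causal_product}, apply the same mixed-product separation \eqref{eq:dec_mid} (your Sylvester identity with $N=P$, justified by $P=Q+A_K^\ast PA$), and exploit the finite-impulse-response form of $M^{-1}(z)$ from Lemma~\ref{lemma:M}, which correctly eliminates the need for the Sylvester equation in $U$ that the general case requires. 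Your minor deviations (the telescoping resolvent identity for the prefactor instead of the paper's push-through step, and absorbing $(I-z^{-1}A_T)$ before rather than after the separation) are cosmetic, and your grouping $Q+A_K^\ast P(A-A_T)=P-A_K^\ast PA_T$ with the diagonal term assigned to the anti-causal block via $z^{-1}\anti{A_K^\ast}$ matches the paper's convention in Lemma~\ref{lemma:decomposition}.
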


\begin{lemma}\label{lemma:our_nehari_sq}
If $B_w$ is a square matrix, the optimal solution to the Nehari problem with the anticausal transfer function $z\overline{A}(z)$ (given in \eqref{eq:lemma_decomposition_sq}) is
\begin{align}
    \lambda_{\max}(Z_1\overline{\Pi})
\end{align}
with $Z_1$ given in \eqref{eq:lyapunov} and $\overline{\Pi}$ solves
\begin{align}
    \overline{\Pi}&= A_k^\ast \overline{\Pi} A_k + (P - A_K^\ast PA_T)Q^{-/2}R_TQ^{-/2} (P - A_T^\ast P A_K),\nn
\end{align}
and can be achieved with the causal transfer function
\begin{align}\label{eq:KN_sq}
& \overline{K}'(z) \\
&\ =
- (I + B_u^\ast PB_u)^{-/2}B_u^\ast \overline{\Pi} \caus{\overline{F}_\gamma} \overline{K}_\gamma Q^{-/2}R_T^{1/2},\nn
\end{align}
where
\begin{align}\label{eq:th_Nehari_gain_sq}
    \overline{K}_\gamma &=  (I - A_K \overline{Z}_\ast A_K^\ast \overline{\Pi} )^{-1}A_K \overline{Z}_\ast (P - A_K^\ast PA_T)\\
    \overline{F}_\gamma &= A_K - \overline{K}_\gamma Q^{-/2}R_T^{1/2} ((P - A_K^\ast PA_T)Q^{-/2}R_T^{1/2})^\ast,\nn
\end{align}
and $\overline{Z}_\ast$ is obtained from \eqref{eq:lyapunov} with~$\gamma^2 = \lambda_{\max}(Z_1\overline{\Pi})$.
\end{lemma}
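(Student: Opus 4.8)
The plan is to apply the state-space Nehari solution, \cite[Th.~9]{sabag2021regret}, directly to the strictly anti-causal transfer function $z\overline{A}(z)$ supplied by Lemma~\ref{lemma:decomposition_sq}, mirroring the proof of Lemma~\ref{lemma:our_nehari}. The essential point is that the square-$B_w$ decomposition produces an anti-causal operator with the \emph{same} output matrix and \emph{same} anti-causal state matrix as in the general case, and differs only in the input matrix; consequently almost all of the work reduces to tracking this one substitution through the Gramians and the state-space formula for the optimal approximant.

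First I would cast $z\overline{A}(z)$ into the canonical form $C\,\anti{A_K^\ast}\,\overline{B}$, reading off the output matrix $C = -(I + B_u^\ast P B_u)^{-/2}B_u^\ast$, the anti-causal state matrix $A_K^\ast$, and the input matrix $\overline{B} = (P - A_K^\ast P A_T)Q^{-/2}R_T^{1/2}$. Since $A_K$ is the stabilizing LQR closed-loop matrix of Lemma~\ref{lemma:Delta}, it is stable, so the relevant controllability and observability Gramians exist and the Hankel-norm characterization of the Nehari value is valid.

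Next I would evaluate the two Gramians. The observability Gramian solves $Z = A_K Z A_K^\ast + C^\ast C$ with $C^\ast C = B_u(I + B_u^\ast P B_u)^{-1}B_u^\ast$, which is exactly the $\gamma = 1$ instance of \eqref{eq:lyapunov}; because $C$ is identical to the one in Lemma~\ref{lemma:our_nehari}, this Gramian is again $Z_1$ and is insensitive to the passage to square $B_w$. The controllability Gramian solves $\overline{\Pi} = A_K^\ast \overline{\Pi} A_K + \overline{B}\,\overline{B}^\ast$, and expanding $\overline{B}\,\overline{B}^\ast = (P - A_K^\ast P A_T)Q^{-/2}R_T Q^{-/2}(P - A_T^\ast P A_K)$ recovers precisely the stated Lyapunov equation for $\overline{\Pi}$. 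The Hankel-norm formula of \cite[Th.~9]{sabag2021regret} then gives the optimal (squared) Nehari value as $\lambda_{\max}(Z_1\overline{\Pi})$.

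For the achievability claim I would instantiate the parametric approximant of \cite[Th.~9]{sabag2021regret} at the optimal level $\gamma^2 = \lambda_{\max}(Z_1\overline{\Pi})$, with $\overline{Z}_\ast = Z_\gamma$ the corresponding solution of \eqref{eq:lyapunov}; propagating the output matrix $C\overline{\Pi}$, the closed-loop state matrix $\overline{F}_\gamma$, and the input matrix $\overline{K}_\gamma\,Q^{-/2}R_T^{1/2}$ through the theorem yields the claimed $\overline{K}'(z)$ together with the gain expressions \eqref{eq:th_Nehari_gain_sq}. I expect the main obstacle to be bookkeeping rather than conceptual: one must carry the input normalizer $Q^{-/2}R_T^{1/2}$ (the square-case analogue of $R_M^{-/2}$) consistently so that it factors out of $\overline{B}$, of the approximant's input matrix, and of the feedback term defining $\overline{F}_\gamma$, and in particular verify that the factor $(P - A_K^\ast P A_T)$ appearing in $\overline{K}_\gamma$ is $\overline{B}$ with this normalizer stripped off. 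Confirming that $I - A_K\overline{Z}_\ast A_K^\ast \overline{\Pi}$ is invertible at the optimal $\gamma$ then closes the argument.
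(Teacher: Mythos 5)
Your proposal is correct and matches the paper's own (largely omitted) argument: the paper disposes of this lemma by stating it "follows from steps similar to Lemma \ref{lemma:our_nehari}," i.e., by applying \cite[Th.~9]{sabag2021regret} to $z\overline{A}(z)$, which is exactly what you do, and your Gramian identifications ($C^\ast C = B_u(I+B_u^\ast P B_u)^{-1}B_u^\ast$ giving $Z_1$, and $\overline{B}\,\overline{B}^\ast = (P-A_K^\ast P A_T)Q^{-/2}R_T Q^{-/2}(P-A_T^\ast P A_K)$ giving $\overline{\Pi}$, consistent with $Q^{-/2}R_TQ^{-/2}=Q^{-1}+T$) correctly fill in the bookkeeping the paper leaves implicit.
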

The proof of Lemma \ref{lemma:M} is proved in the appendix, while Lemmas \ref{lemma:decomposition_sq} and \ref{lemma:our_nehari_sq} follow from steps that are similar to Lemma \ref{lemma:decomposition} and Lemma \ref{lemma:our_nehari}, respectively, and are thus omitted for brevity.

\begin{proof}
We proceed to prove Theorem \ref{th:square_SC} by simplifying the controller
\begin{align}
    \overline{K}(z)&= \Delta^{-1}(z)( \overline{K}'(z)+ \overline{C}(z))M(z),
\end{align}
where $\overline{K}'(z)$, $\overline{C}(z)$ and $M(z)$ are given in \eqref{eq:KN_sq}, \eqref{eq:lemma_decomposition_sq} and \eqref{eq:lemma_Msq}, respectively.
For simplicity, we denote $\Lambda = (I + B_u^\ast P B_u)^{-1}  B_u^\ast$. The first product is:
\begin{align}
    &\Delta^{-1}(z)\overline{C}(z)M(z)\nn\\
    &= -  (I + K_{\text{lqr}} \caus{A}B_u)^{-1} \Lambda
    P \caus{A}A \nn\\
    &\cdot(I + A_T\caus{A_T})B_w\nn\\
    & +  (I + K_{\text{lqr}} \caus{A}B_u)^{-1} \Lambda
    P \caus{A}A_T \nn\\
    &\cdot z \caus{A_T}B_w\nn\\
    &= - (I + K_{\text{lqr}} \caus{A}B_u)^{-1} \Lambda PA \caus{A} \nn\\
    &\cdot(I + A_T\caus{A_T})B_w\nn\\
    & +  (I + K_{\text{lqr}} \caus{A}B_u)^{-1} \Lambda
    P(I+A\caus{A})\nn\\
    &\cdot A_T \caus{A_T}B_w\nn\\
    &= - (I + K_{\text{lqr}} \caus{A}B_u)^{-1} \Lambda PA\caus{A} B_w\nn\\
    & +  (I + K_{\text{lqr}} \caus{A}B_u)^{-1} \Lambda
    P A_T \caus{A_T}B_w\nn\\
    &= - K_{\text{lqr}} \caus{A_K} B_w\\
    & +  (I - K_{\text{lqr}} \caus{A_K}B_u) \Lambda
    P A_T \caus{A_T}B_w\nn
\end{align}
The second product is:
\begin{align}
    &\Delta^{-1}(z)\overline{K}'(z)M(z)\nn\\
    &= - (I - K_{\text{lqr}} \caus{A_K}B_u) \Lambda \overline{\Pi} \caus{\overline{F}_\gamma} \nn\\
    &\cdot\overline{K}_\gamma (I + A_T\caus{A_T})B_w
\end{align}
Putting it all together gives:
\begin{align}
    \overline{K}(z)&=     \begin{pmatrix}
    (R + B_u^\ast P B_u)^{-1}
    B_u^\ast P A_T & - \Lambda \overline{\Pi} & - K_{\text{lqr}}
    \end{pmatrix}\\
    &\cdot \caus{\begin{pmatrix}
    A_T&0&0\\
     \overline{K}_\gamma A_T &F_\gamma&0\\
    B_u \Lambda
    P A_T & - B_u \Lambda \overline{\Pi}&A_K
    \end{pmatrix}}
    \begin{pmatrix}
    I\\
    \overline{K}_\gamma\\
    I
    \end{pmatrix}B_w\nn
\end{align}
Now, let
\begin{align}
    \begin{pmatrix}
    \xi^1_{t+1}\\
    \xi^2_{t+1}\\
    \xi^3_{t+1}
    \end{pmatrix} &= \begin{pmatrix}
    A_T&0&0\\
     \overline{K}_\gamma A_T &F_\gamma&0\\
    B_u \Lambda
    P A_T & - B_u \Lambda \overline{\Pi}&A_K
    \end{pmatrix}
    \begin{pmatrix}
    \xi^1_{t}\\
    \xi^2_{t}\\
    \xi^3_{t}
    \end{pmatrix} \nn\\
    &\ +\begin{pmatrix}
    I\\
    \overline{K}_\gamma\\
    I
    \end{pmatrix}B_w w_t \\
    u_t&= \begin{pmatrix}
    \Lambda P A_T & - \Lambda \overline{\Pi} & - K_{\text{lqr}}
    \end{pmatrix}
    \begin{pmatrix}
    \xi^1_{t}\\
    \xi^2_{t}\\
    \xi^3_{t}
    \end{pmatrix}\\
    &\triangleq \bar{H} \bar{\xi}_t - K_{\text{lqr}}\xi^3_{t}
\end{align}
The equation of the last state is:
\begin{align}
    \xi^3_{t+1}&= B_u \bar{H} \bar{\xi}_t + A_K\xi^3_{t} + B_w w_t\nn\\
    &= B_u \bar{H} \bar{\xi}_t + (A-B_uK_{\text{lqr}})\xi^3_{t} + B_w w_t\nn\\
    &= A\xi^3_{t}  + B_u (\bar{H} \bar{\xi}_t -K_{\text{lqr}}\xi^3_{t}) + B_w w_t\nn\\
    &= A\xi^3_{t}  + B_u u_t + B_w w_t.
\end{align}
To summarize, the controller is:
\begin{align}
    u_t&= (R + B_u^\ast P B_u)^{-1}
    B_u^\ast \begin{pmatrix}
     P A_T & - \overline{\Pi}
    \end{pmatrix}
    \begin{pmatrix}
    \xi^1_{t}\\
    \xi^2_{t}
    \end{pmatrix} - K_{\text{lqr}}x_t
\end{align}
with
\begin{align}
    \begin{pmatrix}
    \xi^1_{t+1}\\
    \xi^2_{t+1}
    \end{pmatrix} &= \begin{pmatrix}
    A_T&0\\
     \overline{K}_\gamma A_T &F_\gamma
    \end{pmatrix}
    \begin{pmatrix}
    \xi^1_{t}\\
    \xi^2_{t}
    \end{pmatrix}  +\begin{pmatrix}
    I\\
    \overline{K}_\gamma
    \end{pmatrix}B_w w_t.
\end{align}
\end{proof}
\subsection{Proof of Theorem \ref{th:scalar}}
We start by computing the solutions to the Lyapunov equations in \eqref{eq:lyapunov} $Z_1= \frac{B_u^2}{(1+B_u^2 P)(1-A_K^2)}, \overline{\Pi}= P^2(Q^{-1}+P)(1-A_K^2)$, which result that the competitive ratio is
\begin{align}
    \text{Comp-Ratio}&= 1 + \frac{B_u^2P^2(Q^{-1}+P)}{1+B_u^2 P}.
\end{align}
We can then compute $\overline{Z}_\ast= \frac1{P^2(1-A_K^2)(Q^{-1}+P)}$, $\overline{K}_\gamma= \frac{A_K}{P(Q^{-1}+P)(1-A_K^2)}$, and $\overline{F}_\gamma= 0$.

Since $\overline{F}_\gamma= 0$, the controller can be written as
\begin{align}
    u_t = - K_{\text{lqr}} x_t + (R + B_u^\ast P B_u)^{-1}
    B_u^\ast s_t
\end{align}
with $s_{t+1}\triangleq PA_K\xi^1_{t+1} - \overline{\Pi}\xi^2_{t+1}$. We can now show that
    \begin{align}
    s_{t+1}&= (PA_K^2 - \overline{\Pi} \overline{K}_\gamma A_K)\xi^1_t + (PA_K - \overline{\Pi} \overline{K}_\gamma ) B_ww_t\nn\\
    &= 0,
\end{align}
where in the first equality we used
\begin{align}
    \xi^1_{t+1}&= A_K\xi^1_t + B_w w_t, \ \ \xi^2_{t+1}= \overline{K}_\gamma A_K\xi^1_t + \overline{K}_\gamma B_w w_t,\nn
\end{align}
and the second equality follows from substituting the explicit constants above.

\bibliography{ref}

\appendix
\section{Proofs of Lemmas $2-5$}

\begin{proof}[\textbf{Proof of Lemma \ref{lemma:nabla}}]
Recall that
\begin{align}
    &I+F(z)F^\ast(z^{-\ast})\nn\\
    &\ \ = I + Q^{1/2}\caus{A}B_uB_u^\ast \anti{A^\ast}Q^{1/2}.
    \end{align}
Writing it in a matrix form
\begin{align}
&\begin{pmatrix}
     Q^{1/2} \caus{A} & I
    \end{pmatrix}
    \begin{pmatrix}
    B_uB_u^\ast & 0\\
    0 & I
    \end{pmatrix}
    \begin{pmatrix}
    \anti{A^\ast}Q^{1/2} \\ I
    \end{pmatrix}\nn\\
&=\begin{pmatrix}
     Q^{1/2} \caus{A} & I
    \end{pmatrix}\nn\\
    &\ \ \cdot\begin{pmatrix}
    B_uB_u^\ast - T + ATA^\ast& ATQ^{1/2}\\
    Q^{1/2}TA^\ast & I + Q^{1/2}TQ^{1/2}
    \end{pmatrix}\nn\\
    &\ \ \cdot \begin{pmatrix}
    \anti{A^\ast}Q^{1/2} \\ I
    \end{pmatrix}.
\end{align}
Note that $R_T\triangleq I + Q^{1/2}TQ^{1/2}\succ0$ and, therefore, the middle matrix can be written as
\begin{align}
    &\begin{pmatrix}
    B_uB_u^\ast - T + ATA^\ast& ATQ^{1/2}\\
    Q^{1/2}TA^\ast & I + Q^{1/2}TQ^{1/2}
    \end{pmatrix}\nn\\
    &=\begin{pmatrix}
  ATQ^{1/2} R_T^{-1}\\
  I
    \end{pmatrix}
    R_T
     \begin{pmatrix}
    R_T^{-1}Q^{1/2}TA^\ast& I
    \end{pmatrix} \\
    &+
    \begin{pmatrix}
     B_uB_u^\ast - T + ATA^\ast -  ATQ^{1/2} R_T^{-1}Q^{1/2}TA^\ast&0\\
    0&0
    \end{pmatrix}\nn.
\end{align}
We now choose $T$ as a solution to the Riccati equation and obtain to obtain
\begin{align}
    \nabla(z)&= \begin{pmatrix}
     Q^{1/2} \caus{A} & I
    \end{pmatrix}
    \begin{pmatrix}
    K_T\\
    I
    \end{pmatrix}
    R_T^{1/2}\\
    &= (Q^{1/2} \caus{A}K_T+I)R_T^{1/2}
\end{align}
where $K_T \triangleq ATQ^{1/2} R_T^{-1}$. Furthermore, $T$ is chosen as the stabilizing solution to the Riccati equation which exists since $(A,Q^{1/2})$ is detectable and $(A,B_u)$ is stabilizable.
\end{proof}

\begin{proof}[\textbf{Proof of Lemma \ref{lemma:M_general}}]
We use Lemma \ref{lemma:nabla} for the factorization $\nabla(z)\nabla^\ast(z^{-\ast}) = I + F(z)F^\ast(z^{-\ast})$. First, we note that case where $B_w$ is squared, we can simply choose $M(z) = z\nabla^{-1}(z)G(z)$ which is causal and its inverse

For the case where $B_w$ is non-square, we need an additional factorization. Recall that by Lemma \ref{lemma:nabla}
\begin{align}\label{eq:zNablaG}
    z\nabla^{-1}(z)G(z) &= R_T^{-/2} Q^{1/2}(I + A_T\caus{A_T})B_w.
\end{align}
We start by writing $M^\ast(z^{-\ast}) M(z)$ it in a matrix form
\begin{align}\label{eq:popov_M}
    \begin{pmatrix}
     B_w^\ast \anti{A_T^\ast} & I
    \end{pmatrix} \Lambda \begin{pmatrix}
    \caus{A_T}B_w \\ I
    \end{pmatrix}
\end{align}
with
\begin{align}
    \Lambda&\triangleq
    \begin{pmatrix}
    A_T^\ast Q^{1/2}R_T^{-1}Q^{1/2} A_T & A_T^\ast Q^{1/2}R_T^{-1} Q^{1/2}B_w\\
    (\cdot)^\ast & B_w^\ast Q^{1/2}R_T^{-1}Q^{1/2} B_w
    \end{pmatrix}.
\end{align}
For any Hermitian $M$, let
\begin{align}
    \Lambda(M) &= \begin{pmatrix}
      A_T^\ast M A_T - M + A_T^\ast Q^{1/2}R_T^{-1}Q^{1/2} A_T & K_M^\ast R_M \\
    R_MK_M & R_M
    \end{pmatrix}\nn\\
    K_M &= R_M^{-1}(B_w^\ast MA_T + B_w^\ast Q^{1/2} R_T^{-1} Q^{1/2}A_T)\nn\\
    R_M &= B_w^\ast Q^{1/2}R_T^{-1}Q^{1/2} B_w + B_w^\ast MB_w.
\end{align}
It can be directly verified that replacing $\Lambda$ with $\Lambda(M)$ in the Popov function \eqref{eq:popov_M} does not change the product.

We now choose $M$ such that the Schur complement of $\Lambda(M)$ is equal to zero:
\begin{align}\label{eq:RiccM}
    M&= A_T^\ast M A_T + A_T^\ast Q^{1/2}R_T^{-1}Q^{1/2} A_T - K_M^\ast R_M^{-1}K_M.
\end{align}
The stabilizing solution for the Riccati equation above exists since $A_T$ is stable by Lemma \ref{lemma:nabla}.

More importantly, note that $\Lambda$ has already a low rank $n$, but our objective is to obtain a matrix $\Lambda(M)$ whose rank is $m<n$. In the case where $B_w$ is square, the factorization is not needed since choosing $M=0$ already gives the desired rank.

To summarize, we have shown that
\begin{align}
    M(z)&= R_M^{1/2}\begin{pmatrix}
     K_M & I
    \end{pmatrix}
    \begin{pmatrix}
    \caus{A_T}B_w \\ I
    \end{pmatrix}\nn\\
    &= R_M^{1/2}(K_M \caus{A_T}B_w + I).
\end{align}
The inverse follows immediately as
\begin{align}
    M^{-1}(z)&= (I +  K_M \caus{A_T}B_w)^{-1}R_M^{-/2}\nn\\
    &= (I - K_M \caus{A_T - B_wK_M }B_w )R_M^{-/2}\nn\\
    &\triangleq (I - K_M \caus{A_M}B_w )R_M^{-/2}
\end{align}
with $A_M = A_T - B_wK_M$.
\end{proof}

\begin{proof}[\textbf{Proof of Lemma \ref{lemma:decomposition}}]
By Lemma \ref{lemma:Delta}, Lemma \ref{lemma:M_general} and \eqref{eq:FG}, we have the transfer functions
\begin{align}
\Delta^{-*}(z^{-*})&= (I + B_u^\ast PB_u)^{-/2}\nn\\
    & \ \ \cdot (I + B_u^\ast(z^{-1}I-A^\ast)^{-1}K_{lqr}^\ast)^{-1}R^{1/2}\nn\\
    F^*(z^{-*}) &= B_u^\ast (z^{-1} I - A^\ast)^{-1}Q^{1/2}\\
    G(z)&= Q^{1/2}\caus{A}B_w\nn\\
    M(z)&= R_M^{1/2}(K_M \caus{A_T}B_w + I).
\end{align}
Consider the product of the anti-causal transfer functions $\Delta^{-*}(z^{-*})F^*(z^{-*})$ (omitting constants on the sides)
\begin{align}\label{eq:proof_decomposition_causal_product}
 & (I + B_u^\ast(z^{-1}I-A^\ast)^{-1}K_{lqr}^\ast)^{-1} B_u^\ast (z^{-1} I - A^\ast)^{-1}\nn\\
 &= B_u^\ast (I + (z^{-1}I-A^\ast)^{-1}K_{lqr}^\ast B_u^\ast)^{-1}  (z^{-1} I - A^\ast)^{-1}\nn\\
 &= B_u^\ast \anti{A_K^\ast}.
\end{align}
so we have $\Delta^{-\ast}(z^{-\ast})F^\ast(z^{-\ast}) = (I + B_u^\ast PB_u)^{-/2} B_u^\ast \anti{A_K^\ast}Q^{1/2}$.
Using a Lyapunov equation, we use a standard decomposition to write
\begin{align}\label{eq:dec_mid}
    &\anti{A_K^\ast}Q\caus{A}\nn\\
    &= \anti{A_K^\ast}A_K^\ast P + PA \caus{A} + P.
\end{align}
By multiplying the strictly causal part of \eqref{eq:dec_mid} with the constants on both sides, we have the first strictly causal part of the product $\Delta^{-*}(z^{-*}) F^*(z^{-*}) G(z)M(z)$
\begin{align}
    C_1(z)&= - (I + B_u^\ast PB_u)^{-/2} B_u^\ast PA\caus{A} B_w M^{-1}(z).\nn
\end{align}
We proceed to combine the anti-causal part of \eqref{eq:dec_mid} with~$M^{-1}(z)$
\begin{align}\label{eq:rand}
    &z^{-1}\anti{A_K^\ast}P B_w (I + K_M \caus{A_T}B_w)^{-1}R_M^{-/2}\nn\\
    &= z^{-1}\anti{A_K^\ast}P (I - B_wK_M \caus{A_M} )B_wR_M^{-/2}\nn\\
    &= z^{-1}\anti{A_K^\ast}P B_wR_M^{-/2}\nn\\
    & - z^{-1}\anti{A_K^\ast}P B_wK_M \caus{A_M} B_wR_M^{-/2}\nn\\
    &= z^{-1}\anti{A_K^\ast}P B_wR_M^{-/2}\nn\\
    & - z^{-1}[\anti{A_K^\ast}A_K^\ast U + UA_M \caus{A_M}+U]\nn\\
    &\cdot B_wR_M^{-/2},
\end{align}
where $U$ solves the Sylvester equation $U = A_K^\ast U A_M + P B_wK_M$. By separating \eqref{eq:rand} into its anti-causal and strictly causal counterparts, we conclude that
\begin{align}
A(z)&= - z^{-1} (I + B_u^\ast PB_u)^{-/2} B_u^\ast \anti{A_K^\ast}\nn\\
&\cdot  (P-A_K^\ast U) B_wR_M^{-/2},
\end{align}
is the anti-causal part, and that the remaining strictly-causal transfer function is
\begin{align}
    C_2(z)&= (I + B_u^\ast PB_u)^{-/2} B_u^\ast U \caus{A_M} B_wR_M^{-/2}.\nn
\end{align}
\end{proof}

\begin{proof}[\textbf{Proof of Lemma \ref{lemma:our_nehari}}]
The proof follows by applying \cite[Th. $9$]{sabag2021regret} to the strictly anticausal transfer function $zA(z)$ in \eqref{eq:lemma_decomp_SC}.
\end{proof}

\begin{proof}[\textbf{Proof of Lemma \ref{lemma:M}}]
In the case where $B_w$ is square, we can simply choose $M(z) = z\nabla^{-1}(z)G(z)$, and show that its inverse is causal and bounded. The expression for $M(z)$ follows from \eqref{eq:zNablaG}. Recall that $B_w$ has a full-column rank and therefore is invertible. The inverse can then be computed as
\begin{align}
M^{-1}(z)&= B_w^{-1}(I + A_T\caus{A_T})^{-1}Q^{-/2}R_T^{1/2}\nn\\
&=B_w^{-1}z^{-1}(zI-A_T)Q^{-/2}R_T^{1/2} \nn\\
&= B_w^{-1}(I-z^{-1}A_T)Q^{-/2}R_T^{1/2}.
\end{align}
Note that the transfer function corresponds to a finite-length impulse response and therefore is bounded.
\end{proof}

\end{document}